\def\cbar{\overline{\CC}}
\def\g{\gamma}
\def\G{\Gamma}
\def\wt{\widetilde}
\def\CC{\mbox{$\mathbb C$}}
\def\ZZ{\mbox{$\mathbb Z$}}
\newtheorem{theorem}{Theorem}[section]
\newtheorem{lemma}[theorem]{Lemma}
\newtheorem{corollary}[theorem]{Corollary}
\newtheorem{definition}{Definition}
\def\smm{{\setminus}}
\def\st{\text{Stab}}
\def\mo{\mathrm{Mon}}
\def\tei{\mathrm{Teich}}
\newtheorem*{theoremA}{Theorem A}
\begin{document}
\title{Rational maps with constant \\ Thurston pullback mapping\footnote{2020 Mathematics Subject Classification: 37F20, 37F34, 30C10.}}
\author{Guizhen Cui \thanks{The first author is supported by NSFC grants [12131016,12071303] and the National Key R\&D Program of China [2021YFA1003203].} and Yiran Wang\footnote{The second author is supported by NSFC grant [12071303].}}
\date{\today}
\maketitle

\begin{abstract}
In this paper, we study CTP maps, that is, marked rational maps with constant Thurston pullback mapping. We prove that all the regular or mixing CTP polynomials satisfy McMullen's condition. Additionally, we construct a new class of examples of CTP maps.
\end{abstract}

\section{Introduction}
In 1980s, Thurston established a topological characterization for post-critically finite branched coverings \cite{DH}. In the proof, he introduced a map on the Teichm{\"u}ller spaces of punctured spheres induced by the post-critically finite branched covering, so called {\it Thurston pullback mapping} following the literature. He showed that the Thurston pullback mapping has an attracting fixed point if and only if for any stable multicurve, the leading eigenvalue of the transition matrix is less than one. Further studies about the Thurston pullback mapping and stable multicurves are still active recently, see for instance \cite{BEK,BEKP,K,KPS,P,S} for more results.

Thurston's theorem was extended to hyperbolic rational maps \cite{CT}. In the case that the Julia set is disconnected, there exists a canonical stable multicurve which characterize the components of the Julia set \cite{CT,CP}. From this point of view, one may expect to derive dynamical invariants from stable multicurves, which are combinatorial invariants. It has been proved that a completely stable multicurve of a post-critically finite rational map induces a combinatorial decomposition of the Julia sets, which leads to a renormalization in certain conditions \cite{CPT,CYY}. As a natural problem, we expect to understand post-critically finite rational maps without completely stable multicurves.

It turns out that a post-critically finite rational map has no completely stable multicurve if the rational map with the post-critically marked set is a CTP map \cite{BEKP}. More precisely, they proved that the Thurston pullback mapping is a constant if and only if each component of the pre-image of a curve in the marked sphere is non-essential. They also provide examples of such rational maps due to McMullen. Later more examples of CTP maps were constructed \cite{BF,Sae1,Sae2}.

In this paper, we characterize CTP polynomials with non-empty regular marked points. Additionally, we construct a new class of examples of CTP maps.

This paper is organized as follows. In \S2, we recall the definitions and some known results about CTP maps, then state our main result. In \S3, we provide some basic properties and tools. A new type of examples is given in \S4. The main result is proved in the last two sections.

\section{Main result}
By a {\bf marked branched covering} $f: (S^2, A)\to (S^2, B)$ we mean a branched covering $f$ of the $2$-sphere $S^2$ with $\deg f\ge 2$, and two finite sets $A, B\subset S^2$ such that $\#A\ge 3$, $\#B\ge 3$ and $f(A)\cup V_f\subset B$, where $V_f$ is the set of critical values of $f$.

Two marked branched coverings $f: (S^2, A)\to (S^2, B)$ and $g: (S^2, A')\to (S^2, B')$ are called {\bf equivalent} if there exists a pair of orientation preserving homeomorphisms $(\phi_0, \phi_1)$ of $S^2$, such that $\phi_0(B)=B'$, $\phi_1(A)=A'$ and the following diagram commutes:
$$
\begin{array}{ccc}
(S^2, A) & \stackrel{\phi_1}{\longrightarrow} & (S^2, A')  \\
f\big\downarrow & & \big\downarrow g \\
(S^2, B) & \stackrel{\phi_0}{\longrightarrow} & (S^2, B').
\end{array}
$$

Denote by $\cbar$ the Riemann sphere. Let $A\subset S^2$ be a finite set with $\#A\ge 3$. Recall that two orientation preserving homeomorphisms $\phi_1, \phi_2: S^2\to\cbar$ are called {\bf Teichm\"{u}ller equivalent rel $A$} if $\phi_2\circ\phi_1^{-1}$ is isotopic to a conformal map of $\cbar$ rel $\phi_1(A)$. The {\bf Teichm\"{u}ller space} $\tei(S^2, A)$ is the space of Teichm\"{u}ller equivalent classes of orientation preserving homeomorphisms $\phi: S^2\to\cbar$. We denote by $[\phi]\in\tei(S^2, A)$ to represent its isotopy class.

Let $f:(S^2, A)\to (S^2, B)$ be a marked branched covering. For any $[\phi_0]\in\tei(S^2, B)$, by the Uniformalization Theorem, there exists a homeomorphism $\phi_1: S^2\to\cbar$ and a rational map $g$ such that the following diagram commutes:
$$
\begin{array}{ccc}
(S^2, A) & \stackrel{\phi_1}{\longrightarrow} & (\cbar, \phi_1(A))  \\
f\big\downarrow & & \big\downarrow g \\
(S^2, B) & \stackrel{\phi_0}{\longrightarrow} & (\cbar, \phi_0(B)).
\end{array}
$$
Moreover, $\phi_1$ is unique up to conformal maps of $\cbar$, and as $\phi_0$ varies in its equivalence class rel $B$, $\phi_1$ stays in the same equivalence class rel $f^{-1}(B)$. In particular, $\phi_1$ stays in its equivalence class rel $A$. Define
$$
\sigma_{f,A,B}:\tei(S^2, B)\to\tei(S^2, A)
$$
by $\sigma_{f,A,B}: [\phi_0]\mapsto[\phi_1]$. It is well-defined and is called the {\bf Thurston pullback mapping} induced by the marked branched covering $f:(S^2, A)\to(S^2, B)$.

\vskip 0.24cm
By definition, $f:(S^2, A)\to (S^2, B)$ is equivalent to $g:(\cbar,\phi_1(A))\to(\cbar,\phi_0(B))$. For any $[\psi_0]\in\tei(\cbar,\phi_0(B))$, there exists a homeomorphism $\psi_1$ of $\cbar$ and a rational map $g_1$ such that the following diagram commutes:
$$
\begin{array}{ccccc}
(S^2, A) & \stackrel{\phi_1}{\longrightarrow} & (\cbar,\phi_1(A)) & \stackrel{\psi_1}{\longrightarrow} & (\cbar, \psi_1\circ\phi_1(A)) \\
f\big\downarrow & & \big\downarrow g & & \big\downarrow g_1 \\
(S^2, B) & \stackrel{\phi_0}{\longrightarrow} & (\cbar,\phi_0(B)) & \stackrel{\psi_0}{\longrightarrow} & (\cbar, \psi_0\circ\phi_0(B)).
\end{array}
$$
Both $\phi_0$ and $\phi_1$ induce bi-holomorphic maps
\begin{equation*}
\begin{split}
\phi_0^*:\,\tei(\cbar, \phi_0(B))\to\tei(S^2, B),\quad [\psi_0]\mapsto[\psi_0\circ\phi_0],  \\
\phi_1^*:\,\tei(\cbar, \phi_1(A))\to\tei(S^2, A),\quad [\psi_1]\mapsto[\psi_1\circ\phi_1],
\end{split}
\end{equation*}
and the following diagram commutes:
$$
\begin{array}{ccc}
\tei(\cbar, \phi_0(B)) & \stackrel{\phi_0^*}{\longrightarrow} & \tei(S^2, B) \\
\sigma_{g,\phi_1(A),\phi_0(B)}\big\downarrow & & \big\downarrow\sigma_{f,A,B} \\
\tei(\cbar, \phi_1(A)) & \stackrel{\phi_1^*}{\longrightarrow} & \tei(S^2, A).
\end{array}
$$
Thus to study the mapping property of Thurston pullback mappings, we only need to consider marked rational maps.

\begin{definition}
A marked rational map with constant Thurston pullback mapping is called a {\bf CTP} map.
\end{definition}

\noindent
{\bf Remark}.
The Thurston pullback mapping is defined originally in the case $A=B$. We adopt the above general definition since we only study the constant case (see \cite{BEKP}).

\vskip 0.24cm
Here are some examples of CTP maps.
\begin{itemize}
\item[(1)] A marked rational map $f:(\cbar, A)\to(\cbar, B)$ is called {\bf trivial} if $\#A=3$. Obviously, a trivial marked rational map is a CTP map.
\item[(2)] A rational map $f$ with $\deg f\ge 2$ is called a {\bf Belyi map} if $\#V_f\le 3$. Let $f:(\cbar, A)\to(\cbar, B)$ be a marked rational map such that $f$ is a Belyi map and $\#(f(A)\cup V_f)=3$. Then it is a CTP map. In fact, for any $[\phi_0]\in\tei(\cbar,B)$, there exists a homeomorphism $\phi_1$ of $\cbar$ and a rational map $g$ such that $g\circ\phi_1=\phi_0\circ f$. Since $\#(f(A)\cup V_f)=3$, $\phi_0$ is isotopic to a conformal map of $\cbar$ rel $f(A)\cup V_f$. Thus $\phi_1$ is isotopic to a conformal map of $\cbar$ rel $A\cup f^{-1}(V_f)$. So $\sigma_{f,A,B}$ is a constant.
\item[(3)]{\bf McMullen's example}. Let $s$ be a Belyi map and $g$ be an arbitrary rational map. If $\#(s(A)\cup V_s)=3$, then $g\circ s:\, (\cbar, A)\to(\cbar, B)$ is a CTP map for any admissible choice of $B$ (refer to \cite{BEKP}).
\end{itemize}

\vskip 0.24cm
A marked rational map $f:(\cbar, A)\to(\cbar, B)$ is called satisfying {\bf McMullen's condition} if there exist a Belyi map $s$ and a rational map $g$ such that $f=g\circ s$ and $\#(s(A)\cup V_s)=3$.

\vskip 0.24cm
Let $f:(\cbar, A)\to(\cbar, B)$ be a marked rational map. Its {\bf regular set} is
$$
E=A\smm f^{-1}(V_f).
$$
The marked rational map will be called {\bf branched} if $E=\emptyset$; {\bf regular} if $E=A$; or {\bf mixing} otherwise. In this paper, we prove the next result.

\begin{theorem}\label{main}
Let $f:(\cbar, A)\to(\cbar, B)$ be a non-trivial regular or mixing CTP polynomial. Then it satisfies McMullen's condition.
\end{theorem}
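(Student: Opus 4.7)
The plan is to combine BEKP's curve-pullback characterization of CTP with the rigidity of polynomials to force McMullen's factorization. By \cite{BEKP}, $\sigma_{f,A,B}$ is constant iff every essential simple closed curve $\gamma\subset\cbar\smm B$ has all components of $f^{-1}(\gamma)$ non-essential in $\cbar\smm A$. Since $f$ is a polynomial of degree $d\ge 2$, we have $\infty\in V_f\subset B$ totally ramified with $f^{-1}(\infty)=\{\infty\}$; we may assume $\infty\in A$. Fix $a_0\in E$ (nonempty by hypothesis) and set $b_0:=f(a_0)\in B\smm V_f$, so that $f^{-1}(b_0)$ consists of $d$ distinct unramified points including $a_0$.

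The first technical step is a curve-pullback computation. If $\#B=3$, then the regular/mixing hypothesis forces $\#(f(A)\cup V_f)=3$, so $f$ itself is Belyi and McMullen's condition holds with $s=f$, $g=\mathrm{id}$; assume henceforth $\#B\ge 4$. For each $b\in B\smm\{\infty,b_0\}$, the simple closed curve $\gamma_b\subset\cbar\smm B$ separating $\{b_0,b\}$ from $B\smm\{b_0,b\}$ is essential. Because $\infty$ is totally ramified, the component of $f^{-1}(D_+)$ containing $\infty$ already has degree $d$, forcing $f^{-1}(D_+)$ to be a single disk; the complementary $f^{-1}(D_-)$ therefore splits into exactly $\#f^{-1}(b)$ topological disks, each (by Riemann--Hurwitz) containing one preimage of $b$ and some preimages of $b_0$. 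The CTP condition then forces each such disk to contain at most one point of $A$, yielding
\[
\#\bigl(f^{-1}(b_0)\cap A\bigr)+\#\bigl(f^{-1}(b)\cap A\bigr)\le\#f^{-1}(b)
\]
for every $b\in B\smm\{\infty,b_0\}$.

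The next step exploits these inequalities, together with analogues from essential curves avoiding $b_0$, to pin down the combinatorial structure of $A$. Summing over $b$ and combining with Riemann--Hurwitz for $f$, I would aim to show that $f(A)$ has at most three elements and that $A\smm\{\infty\}$ lies over at most two non-critical values of $f$. From this rigid structure one constructs the Belyi factor $s$ whose fibers refine those of $f$ in such a way that $A$ maps into a three-element set under $s$, with the third element of $s(A)\cup V_s$ supplied by $s(\infty)$ or by a critical value of $s$. The rational map $g$ completing $f=g\circ s$ is then uniquely determined, and one verifies $\#V_s\le 3$ separately.

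The principal obstacle is the passage from combinatorial inequalities to the existence of the rational map $s$: the inequalities bound only cardinalities, whereas McMullen's condition demands a genuine holomorphic factorization. Constructing $s$ will likely require a monodromy/Riemann-existence argument and careful verification that $\#V_s\le 3$, not merely $\#(s(A)\cup V_s)=3$. The mixing case brings extra difficulty: marked points in $A\smm E$ lie over critical values of $f$ and are invisible to the analysis anchored at the non-critical $b_0$, so they must be recovered via supplementary essential curves $\gamma_{b,b'}$ avoiding $b_0$, at the cost of losing the total-ramification simplification that made $f^{-1}(D_+)$ connected.
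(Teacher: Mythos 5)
There is a genuine gap, and you have in fact located it yourself: everything after your displayed inequality is a plan rather than a proof, and the missing part is precisely where the paper's real work lies. Your opening moves are sound and essentially reproduce the paper's Lemma \ref{marked1}: Theorem A applied to curves $\gamma_b$ separating $\{b_0,b\}$ from the rest of $B$ shows that each component of $f^{-1}(D_-)$ is a disk meeting $A$ in at most one point, and the paper pushes the same idea slightly further to conclude that $f(E)$ is a \emph{single} point and $\#(A\smm E)\le 2$ --- a sharper statement than your ``$f(A)$ has at most three elements,'' and one you will need. But cardinality inequalities of this kind cannot produce the holomorphic factorization $f=g\circ s$: they do not determine the monodromy representation of $f$, and ``a monodromy/Riemann-existence argument'' is not a step, it is the theorem. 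The paper's route is concrete: (a) prove the group-theoretic identity $\st(a)=\st^*(E)$ for every $a\in E$, i.e.\ any loop in $\pi_1(\CC\smm V_f,b)$ fixing one point of $E$ fixes all of them --- this is the combinatorial core, done for $\#V_f=3$ by an induction on a branched tree $T=f^{-1}(I)$ on which $\mo(f)$ acts (Lemmas \ref{chase}--\ref{stab}), and for $\#V_f\ge 4$ by a ``pinching'' surgery that collapses all but two critical values and reduces to the Belyi case (Lemmas \ref{pinching}--\ref{stabm}); and then (b) observe that this identity lets one define, by path lifting, genuine conformal automorphisms $\lambda$ of $\CC$ with $f\circ\lambda=f$ carrying $a_i$ to $a_j$; these generate a finite, hence cyclic rotation group, so $f=g\circ P$ with $P(z)=(z-c)^d$, and the Belyi factor is just a power map (Lemma \ref{symmetry}). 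Nothing in your outline anticipates either the stabilizer identity or the fact that $s$ ends up being a power map, and without (a) there is no mechanism to build $s$ at all.

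Two smaller points. The reduction ``we may assume $\infty\in A$'' is false in the regular case: there $A=E$ is disjoint from $f^{-1}(V_f)\ni\infty$ by definition. And your case split $\#B=3$ versus $\#B\ge 4$ obscures where the difficulty sits: the hard case is $\#V_f=3$ with $\#B=4$ (the fiber $f(E)=b\notin V_f$ supplying the fourth point), which falls into your ``$\#B\ge4$'' branch and is exactly the case occupying all of \S5 of the paper; your claim that $\#B=3$ ``is forced by the regular/mixing hypothesis'' when $f$ is Belyi is therefore not correct.
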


In addition, we construct a new class of examples of CTP maps, refer to \S4.2.

\section{Basic properties}
\subsection{Marked set}
Let $f:(\cbar, A)\to(\cbar, B)$ be a marked rational map. By definition, it is a CTP map if and only if $f:(\cbar, A)\to(\cbar,f(A)\cup V_f)$ is a CTP map.
For simplicity, in this paper, by $(f,A)$ we mean a marked rational map $f:(\cbar, A)\to(\cbar, B)$ with $B=f(A)\cup V_f$.

\begin{lemma}\label{cup}
Let $(f,A_0)$ and $(f,A_1)$ be CTP maps. If $\#(A_0\cap A_1)\ge 3$, then $(f,A_0\cup A_1)$ is also a CTP map.
\end{lemma}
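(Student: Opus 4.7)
The plan is to reduce the statement to two facts: the naturality of the Thurston pullback mapping under forgetful projections between Teichmüller spaces, and the injectivity of the product of two such projections when the intersection of marked sets contains at least three points.

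First, for any inclusion of marked sets $A\subset A'$ with $\#A\ge 3$, there is a forgetful projection $\pi:\tei(\cbar, A')\to\tei(\cbar, A)$ defined by $[\phi]\mapsto[\phi]$, which is well-defined because any isotopy rel $A'$ is in particular an isotopy rel $A$. Since the pullback homeomorphism $\phi_1$ provided by the Uniformization Theorem is unique up to conformal automorphisms and does not depend on the marking, the diagram
$$
\begin{array}{ccc}
\tei(\cbar, B') & \stackrel{\sigma_{f, A', B'}}{\longrightarrow} & \tei(\cbar, A') \\
\pi \big\downarrow & & \big\downarrow \pi \\
\tei(\cbar, B) & \stackrel{\sigma_{f, A, B}}{\longrightarrow} & \tei(\cbar, A)
\end{array}
$$
commutes whenever $B=f(A)\cup V_f$ and $B'=f(A')\cup V_f$. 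Applying this with $A'=A_0\cup A_1$ and $A=A_i$ for $i=0,1$, the hypothesis that $(f,A_0)$ and $(f,A_1)$ are CTP yields that $\pi_i\circ\sigma_{f, A_0\cup A_1, B'}$ is constant for each $i$, where $B'=B_0\cup B_1=f(A_0\cup A_1)\cup V_f$.

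Second, I would show that the combined projection
$$
(\pi_0,\pi_1):\tei(\cbar, A_0\cup A_1)\to\tei(\cbar, A_0)\times\tei(\cbar, A_1)
$$
is injective. Representing points of $\tei(\cbar, A)$ by tuples $(\phi(a))_{a\in A}$ modulo the action of $\text{PGL}_2(\CC)$, suppose $[\phi]$ and $[\phi']$ agree under both $\pi_0$ and $\pi_1$. Then there exist Möbius transformations $g_0,g_1$ with $g_i\circ\phi|_{A_i}=\phi'|_{A_i}$. On $A_0\cap A_1$ we obtain $g_1^{-1}g_0\circ\phi=\phi$, so $g_1^{-1}g_0$ fixes the at least three distinct points $\phi(A_0\cap A_1)$ and is therefore the identity. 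Hence $g_0=g_1$ conjugates $\phi$ to $\phi'$ on all of $A_0\cup A_1$, giving $[\phi]=[\phi']$ in $\tei(\cbar, A_0\cup A_1)$.

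Combining the two steps, the composition $(\pi_0,\pi_1)\circ\sigma_{f, A_0\cup A_1, B'}$ is constant, and since $(\pi_0,\pi_1)$ is injective, $\sigma_{f, A_0\cup A_1, B'}$ itself must be constant, proving that $(f, A_0\cup A_1)$ is a CTP map. The only non-routine point is the naturality of $\sigma$ under the forgetful projections; this essentially follows from the uniqueness in the Uniformization Theorem together with the observation, already made in the paper, that as $\phi_0$ varies in its class rel $B$ the lift $\phi_1$ stays in its class rel $f^{-1}(B)$, hence rel any subset such as $A$ or $A'$.
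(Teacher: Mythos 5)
Your first step, the commutativity of the square relating $\sigma_{f,A_0\cup A_1,B'}$ to $\sigma_{f,A_i,B_i}$ via forgetful projections, is correct and is really just a reformulation of the paper's opening move (one lift $\phi_1$ serves all markings at once). The genuine gap is in your second step. The description of $\tei(\cbar,A)$ as tuples $(\phi(a))_{a\in A}$ modulo $\mathrm{PGL}_2(\CC)$ is a description of the \emph{moduli} space, not the Teichm\"uller space; the latter is the (orbifold) universal cover of the former, and two homeomorphisms that agree on $A$ need not be isotopic rel $A$. With the correct definition, $(\pi_0,\pi_1)$ is \emph{not} injective: pick $a\in A_0\smm A_1$ and $b\in A_1\smm A_0$ (if either difference is empty the lemma is vacuous) and let $h$ be a Dehn twist about a Jordan curve separating $\{a,b\}$ from $A_0\cap A_1$. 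Forgetting $b$ (resp.\ $a$), this curve bounds a once-punctured disk, so $h$ is isotopic to the identity rel $A_0$ and rel $A_1$; but it is an infinite-order element of the pure mapping class group of $(\cbar, A_0\cup A_1)$, so $[\phi]$ and $[\phi\circ h]$ are distinct points of $\tei(\cbar,A_0\cup A_1)$ with the same image under both $\pi_0$ and $\pi_1$. Your Möbius-transformation argument with the three common points only shows that the two markings agree as point configurations, i.e.\ it proves injectivity downstairs in moduli space.

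What your cross-ratio/fixed-point argument does correctly establish is that the fiber of $(\pi_0,\pi_1)$ through a given point is the orbit of that point under the subgroup $K=\ker\bigl(\mathrm{PMod}(\cbar,A_0\cup A_1)\to\mathrm{PMod}(\cbar,A_0)\bigr)\cap\ker\bigl(\mathrm{PMod}(\cbar,A_0\cup A_1)\to\mathrm{PMod}(\cbar,A_1)\bigr)$; the three common marked points are exactly what kills the residual Möbius ambiguity, as in the paper. Since the mapping class group acts properly discontinuously on Teichm\"uller space, this fiber is discrete, and since $\sigma_{f,A_0\cup A_1,B'}$ is continuous on the connected space $\tei(\cbar,B')$, a continuous map into a discrete fiber is constant. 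Adding this discreteness-plus-connectedness step repairs your argument; as written, the claimed injectivity is false and the proof does not go through. (The paper's own proof works pointwise with a single lift $\phi_1$ and the same three-fixed-points observation, and is silent on the analogous passage from ``isotopic to the identity rel $A_0$ and rel $A_1$'' to ``rel $A_0\cup A_1$''; the same discreteness argument is what underlies that step.)
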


\begin{proof}
By the Uniformalization Theorem, for any orientation preserving homeomorphism $\phi_0$ of $\cbar$, there is a homeomorphism $\phi_1$ of $\cbar$ and a rational map $g$ such that $g\circ\phi_1=\phi_0\circ f$. Since $(f,A_0)$ is a CTP map, we may require that $\phi_1$ is isotopic to the identity rel $A_0$. Since $(f,A_1)$ is also a CTP map and $\#(A_0\cap A_1)\ge 3$, $\phi_1$ is isotopic to the identity rel $A_1$. This is because any conformal map of $\cbar$ with three fixed points must be the identity. Thus $\phi_1$ is isotopic to the identity rel $A_0\cup A_1$. By definition, $(f,A_0\cup A_1)$ is a CTP map.
\end{proof}

\subsection{Topological characterization}
The next theorem gives a topological characterization of CTP maps \cite{BEKP}. Let $A\subset\cbar$ be a finite set. A Jordan curve $\G\subset\cbar\smm A$ is {\bf essential} in $\cbar\smm A$ if each component of $\cbar\smm\G$ contains at least two points of $A$.

\begin{theoremA}
A marked rational map $(f,A)$ is a CTP map if and only if for any Jordan curve $\G\subset\cbar\smm(f(A)\cup V_f)$, each component of $f^{-1}(\G)$ is non-essential in $\cbar\smm A$.
\end{theoremA}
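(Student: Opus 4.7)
My plan is to prove both directions via the equivariance of $\sigma_{f,A,B}$ under mapping class group actions, combined with a careful analysis of how Dehn twists lift through the branched cover $f$. The foundational observation is: for any orientation-preserving homeomorphism $\psi$ of $\cbar$ fixing $B$ setwise, lift $\psi$ through $f$ to $\tilde\psi$ with $f\circ\tilde\psi = \psi\circ f$; substituting into the defining diagram of $\sigma_{f,A,B}$ yields
\[
\sigma_{f,A,B}([\phi_0\circ\psi]) = [\phi_1\circ\tilde\psi], \qquad [\phi_1] = \sigma_{f,A,B}([\phi_0]).
\]
Thus $\sigma_{f,A,B}$ agrees at $[\phi_0]$ and $[\phi_0\circ\psi]$ if and only if $\tilde\psi$ is isotopic to the identity rel $A$.

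I then specialize to $\psi = T_\G$, the Dehn twist around a Jordan curve $\G\subset\cbar\smm B$. Choose a thin annular neighborhood $N$ of $\G$ disjoint from $B$, so $f^{-1}(N) = \bigsqcup N_i$ where each $N_i$ is an annular neighborhood of a component $\gamma_i$ of $f^{-1}(\G)$ covering $N$ with some degree $d_i$. An explicit coordinate computation shows the lift $\tilde T_\G$ restricts on each $N_i$ to a ``$(1/d_i)$-twist'' $\alpha_i$ satisfying $\alpha_i^{d_i} = T_{\gamma_i}$, the standard Dehn twist. An isotopy-by-disk-rotation argument gives the key observation: $\alpha_i$ is isotopic to the identity rel $A$ iff $\gamma_i$ is non-essential in $\cbar\smm A$ (the ``if'' by rotating the disk side of $\gamma_i$ through angle $-2\pi/d_i$, either freely or around the at-most-one marked point it contains; the ``only if'' by noting that $\alpha_i^{d_i} = T_{\gamma_i}$ has infinite order when $\gamma_i$ is essential). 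In the easy case where $\G$ is non-essential in $\cbar\smm B$, a direct covering-space analysis on the disk side of $\G$ (carrying at most one point of $B$) shows each component of $f^{-1}(\G)$ bounds a disk with at most one point of $A$, hence is automatically non-essential; so it suffices to treat essential $\G$.

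The $(\Rightarrow)$ direction then follows: CTP combined with equivariance forces $\tilde T_\G$ trivial rel $A$ for every $\G$, and the key observation forces each $\gamma_i$ non-essential. For $(\Leftarrow)$, the topological hypothesis makes each $\alpha_i$ trivial rel $A$; a further analysis of the global ``sheet'' behavior of $\tilde T_\G$ on components of $\cbar\smm f^{-1}(N)$ (constrained by the non-essentiality of the $\gamma_i$) shows $\tilde T_\G$ is globally trivial. Since Dehn twists generate $\text{Mod}(\cbar,B)$, equivariance makes $\sigma_{f,A,B}$ $\text{Mod}(\cbar,B)$-invariant, so it descends to a holomorphic map $\overline\sigma: \MMM(\cbar,B)\to\tei(\cbar,A)$; realizing $\tei(\cbar,A)$ as a bounded domain in $\CC^{\#A-3}$ via the Bers embedding and applying Riemann's extension theorem across the projective Deligne--Mumford compactification $\overline\MMM(\cbar,B)$, the bounded map $\overline\sigma$ extends to a map from a projective variety into a bounded domain, forcing it to be constant. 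I expect the main obstacle to be this last step---promoting $\text{Mod}(\cbar,B)$-invariance to genuine constancy of $\sigma_{f,A,B}$---and an attractive alternative route is to compute $d\sigma_{f,A,B}$ directly as the pushforward $f_*\colon Q(\cbar,A)\to Q(\cbar,B)$ on quadratic differentials and verify its identical vanishing under the topological hypothesis by tracing horizontal foliations of Jenkins--Strebel differentials.
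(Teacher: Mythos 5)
The paper does not actually prove Theorem A; it quotes it from \cite{BEKP}, so the comparison below is with the argument given there. Your forward direction is essentially right, but the lifting step needs repair. The map you describe --- the identity off $f^{-1}(N)$ together with the fractional twist $\alpha_i$ on each $N_i$ --- is not continuous when some $d_i>1$: a lift of $T_\G|_N$ to a degree-$d_i$ annulus cover fixes one boundary circle pointwise only at the cost of rotating the other by $2\pi/d_i$, so no choice of branch makes $\alpha_i$ the identity on all of $\partial N_i$; and in general $T_\G$ need not lift through $f$ at all, let alone to a homeomorphism fixing $A$ pointwise. What always lifts is $T_\G^{k}$ with $k=\mathrm{lcm}(d_i)$, whose lift is the multitwist $\prod_i T_{\gamma_i}^{k/d_i}$, equal to the identity off $f^{-1}(N)$ and fixing $f^{-1}(B)\supset A$ pointwise. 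With that correction your equivariance identity does the job: constancy forces $\prod_i T_{\gamma_i}^{k/d_i}$ to be isotopic to the identity rel $A$, and a product of positive powers of twists about disjoint curves is trivial in the mapping class group of $(S^2,A)$ only if every $\gamma_i$ is non-essential. (Your reduction to essential $\G$ is also correct.)

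The converse, however, has a genuine gap. The hypothesis only yields invariance of $\sigma_{f,A,B}$ under the subgroup $H\subset\mathrm{Mod}(S^2,B)$ generated by the powers $T_\G^{k_\G}$, not under all Dehn twists, since the individual twists were never shown to act trivially (they need not even lift). This subgroup is in general of infinite index: already for $\#B=4$ the pure mapping class group is free on two twists, the three twists correspond to $a$, $b$, $(ab)^{-1}$, and the quotient by the normal closure of their $k$-th powers is the $(k,k,k)$ von Dyck group, which is infinite for $k\ge 3$. So ``Dehn twists generate the mapping class group'' does not apply, $\sigma_{f,A,B}$ does not descend to (a finite cover of) moduli space, and the Deligne--Mumford/Liouville step cannot be run; the quotient of Teichm\"uller space by an infinite-index subgroup can carry non-constant bounded holomorphic maps. (Your extension across the boundary divisor would be fine if full modular invariance were available --- $\MMM_{0,4}=\CC\smm\{0,1\}$ shows it is genuinely needed --- but the invariance itself is the missing ingredient.) The route you flag as an ``attractive alternative'' is in fact how \cite{BEKP} close the argument: identify the coderivative of $\sigma_{f,A,B}$ with the pushforward $f_*$ on integrable quadratic differentials and show, via Jenkins--Strebel differentials and the annuli of their horizontal foliations, that inessential preimages force $f_*\equiv 0$ at every point of Teichm\"uller space, whence $d\sigma_{f,A,B}\equiv 0$ and $\sigma_{f,A,B}$ is constant.
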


The following lemma will be used frequently in this paper.

\begin{lemma}\label{top1}
Let $f$ be a rational map with $\deg f\ge 2$. Let $a_0, a_1\in\cbar$ be two points such that $f(a_0)\notin V_f$ and $f(a_0)\neq f(a_1)$. Then there exists a closed arc $\beta: [0,1]\to\cbar$ joining $f(a_0)$ and $f(a_1)$ such that $\beta(0,1)\subset\cbar\smm V_f$ and $f^{-1}(\beta)$ has a component containing $a_0$ to $a_1$.
\end{lemma}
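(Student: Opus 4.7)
The plan is to construct a path $\gamma:[0,1]\to\cbar$ from $a_0$ to $a_1$ whose interior avoids $f^{-1}(V_f)$, and then take $\beta:=f\circ\gamma$. Since $V_f$ is finite (with at most $2\deg f-2$ points by Riemann--Hurwitz), the sets $U:=\cbar\smm V_f$ and $\tilde U:=f^{-1}(U)=\cbar\smm f^{-1}(V_f)$ are each open and connected subsets of the sphere, and $f|_{\tilde U}:\tilde U\to U$ is an unramified cover. The hypothesis $f(a_0)\notin V_f$ gives $a_0\in\tilde U$, and the distinctness $f(a_0)\neq f(a_1)$ rules out the trivial situation.

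I would first treat the easy case $f(a_1)\notin V_f$, in which $a_1\in\tilde U$. Because $\tilde U$ is a connected (and locally path-connected) open subset of the sphere, I can simply choose any path $\gamma:[0,1]\to\tilde U$ with $\gamma(0)=a_0$ and $\gamma(1)=a_1$.

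Next I would handle the case $f(a_1)\in V_f$, so that $a_1$ is a critical point of $f$ lying outside $\tilde U$. The idea is to approach $a_1$ through a local branched-cover model: choose a disk neighborhood $D$ of $a_1$ on which $f$ is conformally conjugate to $z\mapsto z^k$ and with $D\cap f^{-1}(V_f)=\{a_1\}$, which is possible because $f^{-1}(V_f)$ is discrete. Pick any auxiliary point $a_1'\in D\smm\{a_1\}$; by the previous case applied to the pair $(a_0,a_1')$, join $a_0$ to $a_1'$ by a path $\gamma_1$ inside $\tilde U$. Then extend by a radial arc $\eta\subset D$ from $a_1'$ to $a_1$, so that $\eta(0,1)\subset D\smm\{a_1\}\subset\tilde U$. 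The concatenation $\gamma:=\gamma_1*\eta$ runs from $a_0$ to $a_1$ with $\gamma(0,1)\subset\tilde U$.

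In either case, set $\beta:=f\circ\gamma$. Then $\beta:[0,1]\to\cbar$ joins $f(a_0)$ to $f(a_1)$, and $\beta(0,1)=f(\gamma(0,1))\subset f(\tilde U)\subset U=\cbar\smm V_f$. Finally, $\gamma([0,1])$ is a connected subset of $f^{-1}(\beta([0,1]))$ containing both $a_0$ and $a_1$, so it lies in a single component of $f^{-1}(\beta)$, giving the required component joining $a_0$ to $a_1$. The only mildly technical point is the second case, where I need the local normal form of $f$ at the critical point $a_1$ to approach $a_1$ from within $\tilde U$; everything else reduces to path-connectedness.
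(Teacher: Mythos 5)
There is a genuine gap: the lemma asks for a closed \emph{arc} $\beta$, i.e.\ an embedded copy of $[0,1]$, whereas your $\beta:=f\circ\gamma$ is in general only a path. Since $\deg f\ge 2$, the path $\gamma$ may pass through distinct points $z_1\neq z_2$ with $f(z_1)=f(z_2)$, so $f\circ\gamma$ can self-intersect, and nothing in your argument rules this out. This is not cosmetic: the lemma is used in Corollary \ref{top2} to thicken $\beta$ into a Jordan domain $\Omega$ meeting $V_f$ at most in $\{f(a_1)\}$, and if $\beta$ winds around some other critical value no such Jordan neighborhood exists. The entire content of the paper's proof is precisely the step you skip: starting, as you do, from a curve $f(\alpha)$ in the correct homotopy class rel endpoints in $\cbar\smm V_f$, one replaces it by a representative $\beta$ of the same class with the minimal number of self-intersection points and shows this minimum is $0$ by a surgery near the first self-intersection, a step that uses the hypothesis $f(a_0)\notin V_f$. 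The invariance of the homotopy class rel endpoints in $\cbar\smm V_f$ then guarantees, via homotopy lifting, that $f^{-1}(\beta)$ still has a component containing both $a_0$ and $a_1$.

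The first half of your argument --- producing a connected lift by choosing $\gamma$ upstairs with $\gamma(0,1)\subset\cbar\smm f^{-1}(V_f)$, including the local $z\mapsto z^k$ model to reach $a_1$ when $f(a_1)\in V_f$ --- is fine and matches the paper's starting point. (A minor imprecision: $a_1\in f^{-1}(V_f)$ need not be a critical point; it may be a regular preimage of a critical value, though this does not affect your construction.) To complete the proof you must either add the de-self-intersection argument downstairs, keeping track of the homotopy class so the lifted component survives, or otherwise justify that $\gamma$ can be chosen so that $f\circ\gamma$ is injective; the latter is essentially equivalent to the former and is not automatic.
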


\begin{proof}
Let $\alpha:\,[0,1]\to\cbar$ be an arc joining the points $a_0$ and $a_1$ such that $\alpha(0,1)\subset\cbar\smm f^{-1}(V_f)$. Since $\#V_f<\infty$, there exists a curve $\beta:\,[0,1]\to\cbar$ joining $f(a_0)$ and $f(a_1)$, such that $\beta(0,1)\subset\cbar\smm V_f$, $\beta$ is homotopic to $f(\alpha)$ in $\cbar\smm V_f$ rel the endpoints and the number $k(\beta)$ of self-intersection points of $\beta$ is minimal. We claim that $k(\beta)=0$.

Otherwise, let $t_0\in(0,1)$ be the smallest parameter such that $\beta(t_0)$ is a self-intersectiont point, i.e., there exists $t_1\in (0,1)$ with $t_1\neq t_0$ such that $\beta(t_0)=\beta(t_1)$. Since $f(a_0)\notin V_f$, replacing $\beta(t)$ in a neighborhood of $t_1\in(0,1)$ by a new arc, we obtain a new curve in the homotopy class of $\beta$, such that the number of self-intersection points decreases (see Figure 1). This is a contradiction.

\begin{figure}[htbp]
\centering
\includegraphics[width=6cm]{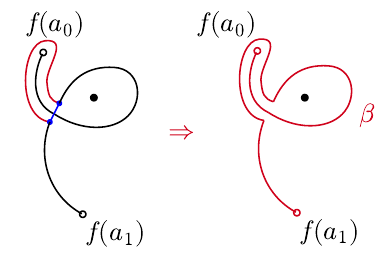}
\caption{From a curve to an arc.}
\end{figure}

Since $\beta$ is homotopic to $f(\alpha)$ in $\cbar\smm V_f$ rel the endpoints, $f^{-1}(\beta)$ has a component containing $a_0$ and $a_1$.
\end{proof}

\begin{corollary}\label{top2}
Let $f$ be a rational map with $\deg f\ge 2$. Let $a_0, a_1\in\cbar$ be two points such that $f(a_0)\notin V_f$ and $f(a_0)\neq f(a_1)$. Then there exists a Jordan domain $\Omega\subset\cbar$ such that
\begin{itemize}
\item $f(a_0), f(a_1)\in\Omega$ and $(\Omega\smm\{f(a_1)\})\cap V_f=\emptyset$,
\item $\partial\Omega$ is disjoint from $V_f$,
\item $f^{-1}(\Omega)$ has a component containing $a_0$ and $a_1$.
\end{itemize}
\end{corollary}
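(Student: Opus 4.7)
The plan is to take $\Omega$ to be a thin Jordan-domain neighborhood of the arc provided by Lemma~\ref{top1}.

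First, apply Lemma~\ref{top1} to the pair $a_0,a_1$ to obtain a simple arc $\beta:[0,1]\to\cbar$ joining $f(a_0)$ and $f(a_1)$ with $\beta(0,1)\subset\cbar\smm V_f$, such that $f^{-1}(\beta)$ has a connected component $K$ containing both $a_0$ and $a_1$. (The proof of Lemma~\ref{top1} produces $\beta$ with $k(\beta)=0$, so $\beta$ is injective on $[0,1]$.) Since $f(a_0)\notin V_f$ by hypothesis, it follows that $V_f\cap\beta([0,1])\subset\{f(a_1)\}$; because $V_f$ is finite, the spherical distance $d$ from $V_f\smm\{f(a_1)\}$ to the compact set $\beta([0,1])$ is strictly positive.

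Next, choose $\epsilon\in(0,d)$ so small that the open $\epsilon$-neighborhood of the arc $\beta([0,1])$ is a Jordan domain $\Omega\subset\cbar$. This is available because a simple arc on a surface admits a neighborhood basis of topological disks; if one prefers to avoid this general fact, first isotope $\beta$ rel endpoints within $\cbar\smm(V_f\smm\{f(a_1)\})$ to a smooth simple arc, for which the $\epsilon$-tube is manifestly a Jordan domain (the distance bound and the conclusion about $K$ are preserved since the covering $f:f^{-1}(\cbar\smm V_f)\to\cbar\smm V_f$ lifts such isotopies).

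Finally, verify the three bullet points. Since $\beta([0,1])\subset\Omega$, both $f(a_0)$ and $f(a_1)$ lie in $\Omega$; the choice $\epsilon<d$ forces $\overline\Omega\cap V_f\subset\{f(a_1)\}$, so $(\Omega\smm\{f(a_1)\})\cap V_f=\emptyset$, and since $f(a_1)$ lies on $\beta$ and hence in the interior of its tubular neighborhood, $f(a_1)\notin\partial\Omega$, giving $\partial\Omega\cap V_f=\emptyset$. For the last condition, $K\subset f^{-1}(\beta)\subset f^{-1}(\Omega)$ is connected and meets both $a_0$ and $a_1$, so these two points lie in a single component of $f^{-1}(\Omega)$. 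The only mildly delicate step is the Jordan-domain-neighborhood claim used in constructing $\Omega$; this is a standard fact from surface topology, which is why I flag the smoothing alternative above.
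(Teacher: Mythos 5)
Your proof is correct and is exactly the intended argument: the paper states this as an immediate corollary of Lemma~\ref{top1} with no written proof, the evident reasoning being to thicken the arc $\beta$ into a thin Jordan neighborhood avoiding $V_f\smm\{f(a_1)\}$, which is what you do. The one point you flag (that a simple arc in $S^2$ has arbitrarily small Jordan-domain neighborhoods) is standard, and your smoothing fallback handles it; just make sure the smoothing isotopy keeps the open arc in $\cbar\smm V_f$ (not merely $\cbar\smm(V_f\smm\{f(a_1)\})$) so the homotopy class rel endpoints used in Lemma~\ref{top1}'s lifting argument is preserved.
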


\begin{lemma}\label{marked1}
Let $(f,A)$ be a non-trivial CTP map with regular set $E\neq\emptyset$. Then $f(E)$ is a single point and $\#(A\smm E)\le 2$.
\end{lemma}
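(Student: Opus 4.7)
The plan is to derive both conclusions by contradiction from a single geometric construction and an appeal to Theorem A. Assume either (a) there exist $a_0,a_1\in E$ with $f(a_0)\neq f(a_1)$, or (b) $\#(A\smm E)\ge 3$; in case (b) I pick $a_0\in E$ (which exists since $E\neq\emptyset$) and any $a_1\in A\smm E$. In both cases $a_0\in E$ ensures $f(a_0)\notin V_f$, and $f(a_0)\neq f(a_1)$ is automatic---in (b) since $f(a_1)\in V_f$. I then apply Corollary \ref{top2} to obtain a Jordan domain $\Omega$ with $f(a_0),f(a_1)\in\Omega$, $(\Omega\smm\{f(a_1)\})\cap V_f=\emptyset$, $\partial\Omega\cap V_f=\emptyset$, and a component $U$ of $f^{-1}(\Omega)$ containing both $a_0$ and $a_1$. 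By taking $\Omega$ to be a sufficiently thin neighborhood of the arc produced by Lemma \ref{top1}, I can further arrange $\Omega\cap f(A)\subset\{f(a_0),f(a_1)\}$ and $\partial\Omega\cap f(A)=\emptyset$ while preserving the component $U$; then $\partial\Omega$ is a Jordan curve in $\cbar\smm(f(A)\cup V_f)$ and $\partial U\cap A=\emptyset$.

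The next step is a topological analysis of $U$. In case (a), $\Omega$ is disjoint from $V_f$ entirely, so $f|_U\colon U\to\Omega$ is a covering of a disk, hence a homeomorphism. In case (b), $f|_{\overline U}\colon\overline U\to\overline\Omega$ is a branched covering of some degree $d$, ramified only above $f(a_1)$; writing $k=\#(f^{-1}(f(a_1))\cap U)\ge 1$, Riemann--Hurwitz gives $\chi(\overline U)=d-(d-k)=k$, while planarity of $\overline U$ together with $\partial U\neq\emptyset$ forces $\chi(\overline U)\le 1$. Hence $k=1$, so $\overline U$ is a closed disk and $a_1$ is the unique preimage of $f(a_1)$ in $U$. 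Consequently in case (b), $(A\smm E)\cap U\subset f^{-1}(V_f\cap\Omega)\cap U\subset\{a_1\}$.

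Finally, I would show both sides of the Jordan curve $\partial U$ carry at least two marked points. The $U$-side contains $\{a_0,a_1\}$. On the complementary side: in case (a), $A\cap U=\{a_0,a_1\}$ by injectivity, so $\#(A\smm U)\ge\#A-2\ge 2$ by non-triviality; in case (b), $(A\smm E)\cap U=\{a_1\}$, so at least $\#(A\smm E)-1\ge 2$ points of $A\smm E$ lie outside $U$. In either case $\partial U$ is essential in $\cbar\smm A$, contradicting Theorem A applied to the Jordan curve $\partial\Omega$. I expect the Riemann--Hurwitz/planarity step constraining $U$ to a disk in case (b) to be the principal subtlety; the counting of marked points and the final appeal to Theorem A are then routine.
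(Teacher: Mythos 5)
Your proof is correct and takes essentially the same route as the paper: both conclusions are derived by contradiction from Corollary \ref{top2} together with Theorem A, in exactly the two cases you distinguish. You merely make explicit two details the paper leaves implicit, namely the Riemann--Hurwitz count showing that $U$ is a disk with $a_1$ the unique preimage of $f(a_1)$ in it, and the perturbation of $\Omega$ so that $\partial\Omega$ avoids $f(A)$.
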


\begin{proof}
Assume by contradiction that there are points $a_0, a_1\in E$ such that $f(a_0)\neq f(a_1)$. By Corollary \ref{top2}, there exists a Jordan domain $\Omega\subset\cbar\smm V_f$ such that $\partial\Omega$ is disjoint from $V_f$, both $f(a_0)$ and $f(a_1)$ are contained in $\Omega$ and $f^{-1}(\Omega)$ has a component $U$ containing $a_0$ and $a_1$. Thus $\partial U$ is an essential curve in $\cbar\smm A$. By Theorem A, this is impossible since $(f,A)$ is a CTP map. Thus $\#f(E)=1$.

Assume by contradiction that $\#(A\smm E)>2$. Pick points $a\in E$ and $c\in A\smm E$. By Corollary \ref{top2}, there exists a Jordan domain $\Omega\subset\cbar$ such that $\partial\Omega$ is disjoint from $V_f$, both $f(a)$ and $f(c)$ are contained in $\Omega$, $\Omega\cap V_f=\{f(c)\}$ and $f^{-1}(\Omega)$ has a component $U$ containing $a$ and $c$. In particular, $\#(U\cap(A\smm E))=1$. Since $\#(A\smm E)>2$, $\partial U$ is essential in $\cbar\smm A$. This is a contradiction by Theorem A.
\end{proof}

\subsection{Monodromy group}
Let $(f,A)$ be a CTP map with regular set $E\neq\emptyset$. Then $b=f(E)$ is a single point by Lemma \ref{marked1}. Let $b'$ be a point in $\cbar\smm V_f$ and
$\g:\,[0,1]\to\cbar\smm V_f$ be a curve with $\g(0)=b$ and $\g(1)=b'$. For each point $a\in f^{-1}(b)$, $\g$ has a unique lift $\delta$ such that $f(\delta(t))=\g(t)$ and $\delta(0)=a$. Define $\tau_{\g}(a)=\delta(1)$. Then $\tau_{\g}$ is a bijection from $f^{-1}(b)$ to $f^{-1}(b')$. Denote
$$
A_{\g}=\tau_{\g}(E)\cup(A\smm E).
$$

\begin{lemma}\label{move}
The marked rational map $(f,A_{\g})$ and $(f,A)$ are equivalent.
\end{lemma}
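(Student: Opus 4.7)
The plan is to realize the equivalence between $(f,A)$ and $(f,A_\g)$ by a ``drag-along-$\g$'' ambient isotopy that carries $b$ to $b'=\g(1)$ at the base and lifts through $f$ to a homeomorphism carrying $f^{-1}(b)$ to $f^{-1}(b')$ at the top. First I note that since every $a\in E$ is regular, $b=f(E)\notin V_f$, while $A\smm E=A\cap f^{-1}(V_f)$ forces $f(A\smm E)\subset V_f$. Consequently the marked set of $(f,A)$ is $B=\{b\}\cup V_f$ and the marked set of $(f,A_\g)$ is $B_\g=\{b'\}\cup V_f$, so the target equalities $\phi_0(B)=B_\g$ and $\phi_1(A)=A_\g$ reduce to tracking the single point $b$ and the fiber over it.

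For $\phi_0$, I would use that $\g([0,1])\subset\cbar\smm V_f$ and $V_f$ is finite to construct a time-dependent vector field supported in a thin tubular neighborhood of $\g$ disjoint from $V_f$, and integrate it to an ambient isotopy $H:\cbar\times[0,1]\to\cbar$ with $H_0=\mathrm{id}$, $H_s|_{V_f}=\mathrm{id}$, and $H_s(b)=\g(s)$. Setting $\phi_0=H_1$ gives a homeomorphism fixing $V_f$ pointwise and sending $b$ to $b'$, hence $\phi_0(B)=B_\g$.

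For $\phi_1$, I would observe that $H_s$ restricts to an isotopy of $\cbar\smm V_f$, and that $f:\cbar\smm f^{-1}(V_f)\to\cbar\smm V_f$ is an unramified covering. The covering isotopy theorem yields a unique lift $\wt H$ with $\wt H_0=\mathrm{id}$ and $f\circ\wt H_s=H_s\circ f$. After extending $\wt H_s$ to all of $\cbar$ by the identity on $f^{-1}(V_f)$, set $\phi_1=\wt H_1$; the intertwining $f\circ\phi_1=\phi_0\circ f$ is built in. To identify $\phi_1(A)$, note that for $a\in E$ the path $s\mapsto\wt H_s(a)$ projects under $f$ to $\g$ and starts at $a$, so by the very definition of $\tau_\g$ one has $\phi_1(a)=\tau_\g(a)$; and for $a\in A\smm E\subset f^{-1}(V_f)$, $\wt H_s(a)$ must lie in the finite fiber $f^{-1}(f(a))$ and vary continuously from $a$, forcing $\wt H_s(a)\equiv a$. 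Hence $\phi_1(A)=\tau_\g(E)\cup(A\smm E)=A_\g$.

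The main obstacle I anticipate is the last technicality: checking that the lift $\wt H_s$, produced a priori only off the branch set, extends as a genuine homeomorphism across the branch points $f^{-1}(V_f)$. In local coordinates where $f$ takes the standard form $z\mapsto z^k$ and $H_s$ fixes the image critical value, one verifies by hand, using the convergence of the lift to the center point as one approaches it, that the extension is continuous and in fact a local homeomorphism. This is routine but is the one step where the branched (rather than covering) nature of $f$ has to be handled explicitly.
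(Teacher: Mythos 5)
Your proposal is correct and follows essentially the same route as the paper: build an ambient isotopy of $\cbar$ rel $V_f$ dragging $b$ along $\g$, lift it through $f$ to an isotopy rel $f^{-1}(V_f)$, and read off the equivalence from the time-one maps. You simply supply more detail than the paper does (the covering-isotopy lifting off the branch locus and the local $z\mapsto z^k$ check at the branch points), all of which is accurate.
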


\begin{proof}
There is an isotopy $\phi_t$ ($0\le t\le 1$) of $\cbar$ rel $V_f$ such that $\phi_0=\mathrm{id}$ and $\phi_t(b)=\g(t)$. Thus there is an isotopy $\psi_{t}$ of $\cbar$ rel $f^{-1}(V_f)$ such that $\psi_0=\mathrm{id}$ and $f\circ\psi_t=\phi_t\circ f$ for $t\in[0,1]$. Denote $E_t=\psi_t(E)$ and $A_t=E_t\cup(A\smm E)$. Then the marked rational map $(f,A_t)$ is equivalent to the marked rational map $(f,A)$.
\end{proof}

Now we take $b'=b$. Then $\g$ represents an element of the fundamental group $\pi_1(\cbar\smm V_f, b)$. The above correspondence $\g\to\tau_{\g}$ actually defines a group homomorphism $\tau$ from $\pi_1(\cbar\smm V_f, b)$ to a permutation group on $f^{-1}(b)$. Denote
$$
\mo(f,b)(\text{ or }\mo(f))=\tau(\pi_1(\cbar\smm V_f, b)).
$$
It is called the {\bf monodromy group} of $f$. The next result is a consequence of Lemma \ref{move}.

\begin{corollary}\label{mon}
For any $\tau\in\mo(f,b)$, $(f,\tau(E)\cup(A\smm E))$ is also a CTP map.
\end{corollary}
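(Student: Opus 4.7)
The plan is to deduce this corollary directly from Lemma \ref{move}, once we verify the almost-tautological fact that the CTP property is an invariant of the equivalence class of a marked rational map.

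First I would take any $\tau\in\mo(f,b)$ and choose a loop $\g:[0,1]\to\cbar\smm V_f$ based at $b$ whose monodromy $\tau_{\g}$ equals $\tau$; this is possible by the very definition $\mo(f,b)=\tau(\pi_1(\cbar\smm V_f,b))$. With $b'=b$, the set constructed in the preceding discussion is exactly
\[
A_{\g}=\tau_{\g}(E)\cup(A\smm E)=\tau(E)\cup(A\smm E).
\]
By Lemma \ref{move} the marked rational map $(f,A_{\g})$ is equivalent to $(f,A)$, so our target coincides with a marked rational map equivalent to $(f,A)$.

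Next I would invoke the standard fact that being a CTP map depends only on the equivalence class of a marked branched covering. This is built into the setup of the introduction: if $(\phi_0,\phi_1)$ realize an equivalence between $(f,A)$ and $(f,A_{\g})$, then $\phi_0^*$ and $\phi_1^*$ are bi-holomorphisms of the corresponding Teichm\"uller spaces conjugating the two Thurston pullback mappings (the square just below the definition of $\sigma_{f,A,B}$ in \S2 is exactly this compatibility). Hence $\sigma_{f,A_{\g},f(A_{\g})\cup V_f}$ is constant if and only if $\sigma_{f,A,f(A)\cup V_f}$ is.

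Combining these two observations, $(f,A)$ being a CTP map forces $(f,A_{\g})=(f,\tau(E)\cup(A\smm E))$ to be a CTP map as well. There is no real obstacle here; the only point that deserves a sentence of justification is that CTP is an equivalence invariant, and this is immediate from the commutative square relating $\sigma_{f,A,B}$ to $\sigma_{g,\phi_1(A),\phi_0(B)}$ in \S2.
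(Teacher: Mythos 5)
Your proposal is correct and matches the paper's intent exactly: the paper offers no separate proof, stating only that the corollary is a consequence of Lemma \ref{move}, and your argument simply fills in the two routine steps (choosing a loop realizing $\tau$ and noting that the CTP property is an invariant of the equivalence class, via the commutative square of Teichm\"uller spaces in \S2). Nothing is missing.
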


\begin{lemma}\label{marked2}
Let $(f,A)$ be a CTP map with regular set $E\neq\emptyset$. If $\#(A\smm E)=2$, then $(f,A)$ satisfies McMullen's condition.
\end{lemma}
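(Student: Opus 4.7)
The plan is to construct a Belyi factor $s$ directly from the combinatorial data that Theorem~A imposes on the fibers of $f$. Set $b := f(E)$, which by Lemma~\ref{marked1} is a single point with $b \notin V_f$, and write $A \smm E = \{c_1, c_2\}$, $b_i := f(c_i)$; since $c_i \in f^{-1}(V_f)$, each $b_i$ lies in $V_f$, so the three points $b, b_1, b_2 \in B$ are distinct.

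First, I would choose Jordan domains $D_1, D_2, D_{12} \subset \cbar$ with boundaries avoiding $B$ such that $D_i \cap B = \{b, b_i\}$ and $D_{12} \cap B = \{b, b_1, b_2\}$. Each component $U$ of $f^{-1}(D_i)$ is a topological disk mapped as a branched covering onto $D_i$, and by Theorem~A, $\partial U$ is non-essential in $\cbar \smm A$. Since $c_{3-i} \notin f^{-1}(D_i)$, a point-counting argument comparing $|U \cap A|$ to $|A \smm U|$ yields the dichotomy that either $|U \cap A| \le 1$ or $U$ contains all of $E \cup \{c_i\}$. Hence either one sheet of $f$ over $D_i$ absorbs $E \cup \{c_i\}$ entirely, or $c_i$ sits alone in its own sheet while $E$ is spread across the other sheets with at most one point per sheet. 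Combining this analysis for $i = 1, 2$ with the corresponding analysis for $D_{12}$ produces a combinatorial tripod structure on the fibers of $f$ that is branched only over $\{b, b_1, b_2\}$.

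Next, I would translate this combinatorial tripod cover into an actual Belyi map $s : \cbar \to \cbar$ of degree dividing $\deg f$, with three critical values $\{p, q_1, q_2\}$ satisfying $s(E) = \{p\}$ and $s(c_i) = q_i$, so that $\#(s(A) \cup V_s) = 3$ automatically. The Riemann existence theorem realizes the combinatorial cover as a rational map, and the second factor $g$ with $f = g \circ s$ is obtained fiberwise, at which point the lemma is proved.

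The main obstacle I anticipate is this final step: upgrading a topological/combinatorial factorization to a genuine rational one. It is not automatic that the Belyi cover produced from the CTP constraints is actually a factor of $f$ as a rational map rather than merely compatible with it topologically. I expect to handle this either by exhibiting $s$ as a quotient by an explicit equivalence relation on $\cbar$ manifestly compatible with $f$ (so that $g := f \circ s^{-1}$ descends to a well-defined rational map), or by invoking a rigidity/uniqueness theorem on intermediate coverings of $\cbar$ to lift the combinatorial factorization uniquely to a holomorphic one.
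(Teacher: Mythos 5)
Your plan stalls exactly at the step you flag as the ``main obstacle,'' and the obstacle is more serious than you suggest: it is not merely a matter of upgrading a topological factorization to a holomorphic one, because you never actually produce a topological factorization. A factorization $f=g\circ s$ is equivalent to a system of imprimitivity for the \emph{full} monodromy group of $f$, i.e.\ a partition of a fiber into blocks preserved by the monodromy around \emph{every} critical value of $f$. Your disks $D_1,D_2,D_{12}$ only see the loops around $b_1,b_2$ (and $b$), while $V_f$ may contain many other critical values whose monodromy is completely unconstrained by your point-counting; so the ``combinatorial tripod structure'' cannot determine a factor of $f$. (Two smaller problems: $b_1=b_2$ is possible, since both points of $A\smm E$ may map to the same critical value, which breaks your setup; and you never resolve which branch of your dichotomy actually occurs.) The real content of the lemma is an analytic rigidity statement that Theorem~A alone, applied to a couple of curves, does not deliver.

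For contrast, the paper's proof uses the constancy of the Thurston pullback in an essentially analytic way. After normalizing $A\smm E=\{0,\infty\}$ and reducing to a maximal CTP map, one shows (via Corollary~\ref{top2} and Theorem~A) that $E\cup\{0\}$ lies in a single component $U$ of $f^{-1}(\Omega)$ for a small disk $\Omega$ around $f(0)$, and (via Lemma~\ref{cup} and Corollary~\ref{mon}) that $E$ is exactly the orbit $\{a,\rho^n(a),\dots,\rho^{(d-1)n}(a)\}$ of the local monodromy $\rho$ around $f(0)$. The key step is then that, because the pullback map is constant, the cross-ratio $(0,\infty,a,\rho^n(a))=\rho^n(a)/a$ does not change as the marked point $f(a)$ moves in $\Omega\smm\{f(0)\}$ (Lemma~\ref{move}); letting $f(a)\to f(0)$ forces this constant to equal $\zeta=e^{2\pi i/d}$, whence $f(\zeta z)=f(z)$ identically. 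This exact rotational symmetry is what makes $g(w)=f(P^{-1}(w))$ with $P(z)=z^d$ a well-defined rational map, and it is precisely the ingredient your combinatorial scheme has no substitute for. If you want to salvage your approach, you would need to extract a monodromy-invariant block structure over \emph{all} of $V_f$, and that appears to require the cross-ratio rigidity (or an equivalent use of the constancy of $\sigma_{f,A,B}$) rather than Riemann existence or Lüroth-type arguments, which only become relevant after the imprimitivity system is in hand.
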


\begin{proof}
At first, we may assume that $(f,A)$ is a maximal CTP map, i.e., for any finite set $A'\subset\cbar$ with $A'\supset A$, either $A'=A$ or $(f,A')$ is not a CTP map. In fact, let $A'$ be the union of all the subsets $A''$ of $f^{-1}(f(E)\cup V_f)$ such that $A''\supset A$ and $(f,A'')$ are CTP maps. Then $(f,A')$ is a maximal CTP map by Lemma \ref{cup} and \ref{marked1}.

Up to equivalence, we may assume that $A\smm E=\{0,\infty\}$. Pick a point $a\in E$, by Corollay \ref{top2}, there exists a Jordan domain $\Omega\subset\cbar$ such that $\partial\Omega$ is disjoint from $V_f$, both $f(a)$ and $f(0)$ are contained in $\Omega$, $\Omega\cap V_f=\{f(0)\}$ and $f^{-1}(\Omega)$ has a component $U$ containing $a$ and $0$. By Theorem A, $\partial U$ is non-essential. Thus $\#(A\smm U)\le 1$. Since $\infty\notin U$, we have $E\subset U$. Note that $\#E\ge 2$. Thus $d_0:=\deg_{z=0}f\ge 2$.

Let $\rho\in\mbox{Mon}(f,f(a))$ be a permutation induced by a simple closed curve $\g$ in $\Omega$ such that the bounded component of $\cbar\smm\g$ contains $f(0)$. Then for any other else point $a'\in E$, there is an integer $0<n(a')<d_0$ such that $\rho^{n(a')}(a')=a$. Set
$$
n=\inf\{n(a'):\,a'\in E\smm\{a\}\}.
$$
Since $\#(\rho^{n}(E)\cap E)+\#(A\smm E)\ge 3$, $(f,\rho^{n}(E)\cup E\cup(A\smm E))$ is also a CTP map by Lemma \ref{cup} and Corollary \ref{mon}. Thus $\rho^{n}(E)\subset E$ since $(f,A)$ is a maximal CTP map. It follows that $\rho^{n}(E)=E$, $d:=d_0/n$ is an integer and
$$
E=\{a,\rho^{n}(a),\cdots,\rho^{(d-1)n}(a)\}.
$$

As $f(a)$ varies in $\Omega\smm\{f(0)\}$, the cross-ratio
$$
(0,\infty, a, \rho^{n}(a))=\frac{\rho^{n}(a)}{a}
$$
is a constant. By considering the asymptotic behavior as $f(a)\to f(0)$, we obtain
$$
\frac{\rho^{n}(a)}{a}=\zeta:=e^{\frac{2\pi i}{d}}.
$$
Thus $f(\zeta z)=f(z)$ for $z\in U$.

Set $P(z)=z^d$. Then $P(E)$ is a single point and $g(w):=f(P^{-1}(w))$ is a well-defined rational map. Therefore $(f,A)$ satisfies McMullen's condition.
\end{proof}

\section{Examples}
\subsection{Regular case}
The next CTP map found by Saenz does not satisfy McMullen's condition \cite{Sae1}. Here we provide a proof for self-containment. Let
$$
S(z)=\frac{z^{3}(2-z)}{2z-1}.
$$
Then $V_{S}=\{0,1,\infty\}$ and $\deg_{c}S=3$ for $c=0,1,\infty$. Set $A=S^{-1}(b)$ for $b\in\cbar\smm V_{S}$.

\begin{theorem}\label{Saenz}
$(S,A)$ is a CTP map which does not satisfy McMullen's condition.
\end{theorem}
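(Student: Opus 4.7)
The plan has two parts: verify the CTP property via Theorem A, then rule out McMullen's condition by a factorization analysis. The key computation I would do up front is the preimage structure of $S$: from $\deg S = 4$ and $S'(z) = -6z^2(z-1)^2/(2z-1)^2$, the critical points are $0, 1, \infty$, each of local degree $3$, and each critical value has the uniform $(3, 1)$ preimage profile $S^{-1}(0) = \{0, 2\}$, $S^{-1}(1) = \{1, -1\}$, $S^{-1}(\infty) = \{\infty, 1/2\}$; also $|A| = 4$.

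By Theorem A, I need to verify that every Jordan curve $\Gamma \subset \cbar \smm \{0, 1, \infty, b\}$ has every component of $S^{-1}(\Gamma)$ non-essential in $\cbar \smm A$. I would case-split on how $\Gamma$ partitions the four points $\{0, 1, \infty, b\}$ between its complementary disks, letting $D$ be the side containing the chosen subset. If $D$ contains at most one of the four points, a Riemann-Hurwitz count shows $S^{-1}(D)$ is either four disks (when $D$ avoids $V_S$) or two disks with degree profile $(3, 1)$ (when $D$ contains one critical value), and each such preimage disk contains at most one point of $A$, so every component of $S^{-1}(\Gamma)$ is non-essential. The essential case is the $(2, 2)$ partition: then $D$ contains $b$ together with exactly one critical value $c \in V_S$, and Riemann-Hurwitz with the $(3, 1)$ profile again produces $S^{-1}(D)$ as two disks $U_1, U_2$ on which $S$ has degrees $3$ and $1$; since $b$ is non-critical, its four preimages split as $3$ in $U_1$ and $1$ in $U_2$, so each boundary separates $A$ as $(3, 1)$ — non-essential. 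Thus $(S, A)$ is a CTP map.

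For the McMullen obstruction, suppose for contradiction that $S = g \circ s$ with $s$ a Belyi map ($\deg s \geq 2$) and $\#(s(A) \cup V_s) = 3$. Since $\deg S = 4$, only $(\deg s, \deg g) \in \{(4, 1), (2, 2)\}$ is possible. In the $(4, 1)$ case, $g$ is Möbius, so $V_s = g^{-1}(V_S)$ has three elements while $s(A) = \{g^{-1}(b)\}$ is a single point disjoint from $V_s$ (since $b \notin V_S$), yielding $\#(s(A) \cup V_s) = 4 \neq 3$. The main obstacle is the $(2, 2)$ case, which I would rule out via multiplicativity of local degrees: $\deg_0(g \circ s) = \deg_0 s \cdot \deg_{s(0)} g \in \{1, 2, 4\}$, which cannot equal $\deg_0 S = 3$. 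Both cases contradict McMullen's condition, so $(S, A)$ does not satisfy it.
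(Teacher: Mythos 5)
Your proposal is correct and follows essentially the same route as the paper: Theorem A plus a case analysis of how a Jordan curve in $\cbar\smm\{0,1,b,\infty\}$ separates the marked points, with the only essential case being the $(2,2)$ partition where $S^{-1}(D)$ consists of two disks of degrees $3$ and $1$. Your degree-multiplicativity argument ruling out McMullen's condition (local degree $3$ at $0$ versus the possible products $\deg_0 s\cdot\deg_{s(0)}g\in\{1,2,4\}$, plus the $\deg g=1$ count) is just a fully spelled-out version of the paper's one-line appeal to $\deg_c S=3$ for $c=0,1,\infty$.
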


\begin{proof}
Denote $B=\{0,1,b,\infty\}$. For any Jordan curve $\G$ in $\cbar\smm B$, if $\G$ is non-essential in $\cbar\smm B$, then each component of $S^{-1}(\G)$ is non-essential in $\cbar\smm A$.

Now we assume that $\G$ is essential in $\cbar\smm B$. Then one component $U$ of $\cbar\smm\G$ contains the point $b$ and a critical value. Thus $S^{-1}(U)$ has exactly two components. Both of them are Jordan domains, one contains three points in $A$, and another contains one points in $A$. Consequently, each component of $S^{-1}(\G)$ is non-essential in $\cbar\smm A$. By Theorem A, $(S,A)$ is a CTP map. Since $\deg_{c}S=3$ for $c=0,1,\infty$, $S$ does not satisfy McMullen's condition.
\end{proof}

\vskip 0.24cm
Following McMullen's idea, one may produce more CTP maps as the following.

\begin{lemma}\label{Mc}
Let $(f,A)$ be a CTP map. Then for any rational map $g$, $(g\circ f,A)$ is also a CTP map.
\label{com}
\end{lemma}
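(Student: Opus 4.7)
The plan is to apply the topological characterization (Theorem A) in both directions: use the CTP property of $(f,A)$ to control preimages of curves under $f$, and then conclude the CTP property for $(g\circ f,A)$ by lifting through $g$ first and then through $f$. First I would record the standard identity $V_{g\circ f}=V_g\cup g(V_f)$, so that the target marked set for the composition is
$$
(g\circ f)(A)\cup V_{g\circ f}=g(f(A))\cup V_g\cup g(V_f).
$$

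Next, I would take an arbitrary Jordan curve $\G\subset\cbar\smm(g(f(A))\cup V_{g\circ f})$ and decompose $(g\circ f)^{-1}(\G)=f^{-1}(g^{-1}(\G))$ in two stages. Since $\G$ avoids $V_g$, the preimage $g^{-1}(\G)=\G_1\sqcup\cdots\sqcup\G_k$ is a disjoint union of Jordan curves. Because $\G$ also avoids $g(V_f)$ and $g(f(A))$, each $\G_i$ avoids $V_f$ and $f(A)$; hence each $\G_i$ is a Jordan curve in $\cbar\smm(f(A)\cup V_f)$.

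Applying Theorem A to the CTP map $(f,A)$ then gives that every component of $f^{-1}(\G_i)$ is non-essential in $\cbar\smm A$. Since the $\G_i$ are pairwise disjoint, the components of $(g\circ f)^{-1}(\G)$ are precisely the components of $f^{-1}(\G_1),\dots,f^{-1}(\G_k)$, and so each is non-essential in $\cbar\smm A$. Applying Theorem A in the reverse direction yields that $(g\circ f,A)$ is a CTP map.

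I do not expect any serious obstacle; the argument is a routine two-step lifting. The only point requiring a little care is the verification that each $\G_i\subset\cbar\smm(f(A)\cup V_f)$, which follows directly from the choice of $\G$ avoiding $V_{g\circ f}=V_g\cup g(V_f)$ together with $g(f(A))$, so that after one lift through $g$ all forbidden sets for the curve-lifting step through $f$ are automatically avoided.
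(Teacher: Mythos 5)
Your proof is correct, but it takes a genuinely different route from the paper's. The paper argues directly from the definition of the Thurston pullback mapping: lifting a homeomorphism $\phi_0$ first through $g$ and then through $f$ yields the factorization $\sigma_{g\circ f,A,B'}=\sigma_{f,A,B}\circ\sigma_{g,B,B'}$ (with $B=f(A)\cup V_f$ and $B'=g(B)\cup V_g$), and a composition whose last factor is constant is constant. You instead invoke the topological characterization (Theorem A) in both directions, decomposing $(g\circ f)^{-1}(\G)$ by first lifting $\G$ through $g$ to a disjoint union of Jordan curves and then applying the CTP hypothesis for $(f,A)$ to each of them. Your bookkeeping is all in order: $V_{g\circ f}=V_g\cup g(V_f)$, so each component $\G_i$ of $g^{-1}(\G)$ avoids $f(A)\cup V_f$ and is a Jordan curve, and the components of $(g\circ f)^{-1}(\G)$ are exactly the components of the various $f^{-1}(\G_i)$, each non-essential in $\cbar\smm A$. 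The trade-off is that the paper's argument is more structural --- it exhibits the general functorial identity for pullback mappings under composition and does not rely on Theorem A at all --- whereas yours uses Theorem A as a black box but makes the curve-level mechanism behind the constancy completely explicit.
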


\begin{proof}
Set $B=f(A)\cup V_{f}$ and $B'=g(B)\cup V_{g}$. Then $B'=g\circ f(A)\cup V_{g\circ f}$. For any $[\phi_0]\in\tei(\cbar,B)$, there are homeomorphisms $\phi_1,\phi_2$ of $\cbar$ and rational maps $f_0,g_0$ such that the following diagram commutes
$$
\begin{array}{ccc}
(\cbar, A) & \stackrel{\phi_2}{\longrightarrow} & (\cbar, \phi_2(A)) \\
f\big\downarrow & & \big\downarrow f_0 \\
(\cbar, B) & \stackrel{\phi_1}{\longrightarrow} & (\cbar, \phi_1(B)) \\
g\big\downarrow & & \big\downarrow g_0 \\
(\cbar, B') & \stackrel{\phi_0}{\longrightarrow} & (\cbar, \phi_0(B')).
\end{array}
$$
Thus
$$
\sigma_{g\circ f,A,B'}=\sigma_{f,A,B}\circ\sigma_{g,B,B'}:\,\tei(\cbar,B')\to\tei(\cbar,B)\to\tei(\cbar,A).
$$
So $\sigma_{g\circ f,A,B'}$ is a constant since $\sigma_{f,A,B}$ is a constant.
\end{proof}

By Lemma \ref{Mc}, for any rational map $g$, $(g\circ S,A)$ is a CTP map. We will call a marked rational map of this form satisfying {\bf Saenz's condition}.

\subsection{Mixing case}
Let
$$
R(z)=-\left[\frac{(z^2-1)(z^2+3)}{4z^2}\right]^3.
$$
It can be factored as $R=P_3\circ g\circ P_2$, where $P_d(z)=z^d$ and $g(z)=-(z-1)(z+3)/(4z)$. The map $g$ has two critical points $(-i\sqrt 3, i\sqrt 3)$, which map to $(e^{\frac{2\pi i}{3}},e^{\frac{4\pi i}{3}})$ by $g$.

\begin{figure}[htbp]
\centering
\includegraphics[width=10cm]{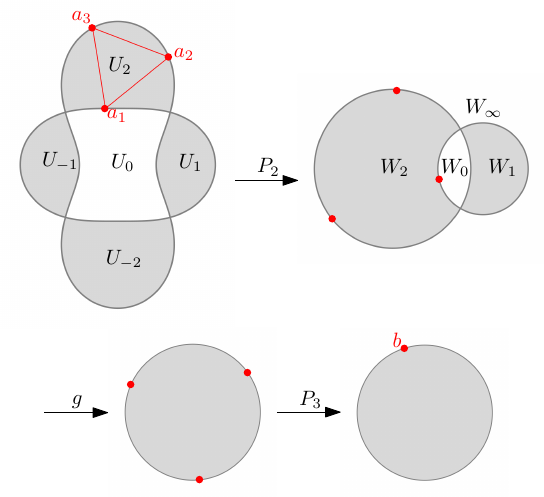}
\caption{The picture of $R^{-1}(S^1)$.}
\end{figure}

It is easy to check that:
\begin{itemize}
\item $V_{R}=\{0,1, \infty\}$.
\item $R^{-1}(\infty)=\{0,\infty\}$, $\deg_{z=0}R=\deg_{z=\infty}R=6$.
\item $R^{-1}(0)=\{\pm 1, \pm i\sqrt{3}\}$, $\deg_{z=c}R=3$ for $c=\pm 1, \pm i\sqrt{3}$.
\item $R^{-1}(1)$ contains $4$ simple critical points which are the roots of the equation $z^4=-3$.
\end{itemize}

Denote by $S^1$ the unit circle. Then $S^1=P_3^{-1}(S^1)$ contains the critical values of $g$. Thus $g^{-1}(S^1)$ divides $\cbar$ into $4$ Jordan domains, $W_0, W_{\infty}, W_1$ and $W_{2}$ which contain the points $\{0,\infty,1,-3\}$, respectively.

Denote by $U_0=P_2^{-1}(W_0)$ and $U_{\infty}=P_2^{-1}(W_{\infty})$. Then $R$ maps these two domains to the outside of the unit disk with degree $6$.
Denote by $U_1, U_{-1}$ be the two components of $P_2^{-1}(W_1)$, and $U_{2}, U_{-2}$ the two components of $P_2^{-1}(W_2)$.
Then $R$ maps these $4$ domains to the unit disk with degree $3$ (see Figure 2).

Pick a point $b\in S^1$ with $b\neq 1$. Then $E=\partial U_{2}\cap R^{-1}(b)$ contains $3$ points, two of them lie on $\partial U_{\infty}$ and one lies on $\partial U_0$. Set $A=E\cup\{\infty\}$.

\begin{theorem}\label{mixing}
$(R,A)$ is a CTP map.
\end{theorem}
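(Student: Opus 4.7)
The strategy is to invoke Theorem~A: with $B = R(A)\cup V_R = \{0,1,b,\infty\}$, it suffices to check that for every Jordan curve $\Gamma\subset\cbar\smm B$, every component $\gamma$ of $R^{-1}(\Gamma)$ is non-essential in $\cbar\smm A$. Since $\#A = 4$, this amounts to saying $\gamma$ bounds a disk containing at most one point of $A$, i.e., $\gamma$ does not split $A$ into a $2+2$ partition.

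The foundational observation I would exploit is that among the six complementary domains $U_0$, $U_\infty$, $U_{\pm 1}$, $U_{\pm 2}$ of $R^{-1}(S^1)$, only $U_\infty$ meets $A$, with $A\cap U_\infty = \{\infty\}$, because $E\subset\partial U_2\subset R^{-1}(S^1)$. Consequently, when $\Gamma\cap S^1 = \emptyset$, the curve $\Gamma$ lies entirely in $\DD$ or entirely in $\cbar\smm\overline{\DD}$, so $R^{-1}(\Gamma)\subset\bigsqcup_j U_j$ and every component $\gamma$ is contained in a single $U_j$; the disk $\gamma$ bounds inside $U_j$ contains at most one point of $A$, so $\gamma$ is non-essential. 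This disposes of the case $\Gamma\cap S^1 = \emptyset$ immediately.

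For the remaining case $\Gamma\cap S^1\ne\emptyset$, my plan is to reduce non-essentiality of each $\gamma$ to a grouping property: the three points $\{\infty, e_1, e_2\}$ all lie in the closed disk $\overline{U_\infty}$, and I would show that they always remain on a single side of $\gamma$, so the only possible non-trivial partition of $A$ induced by $\gamma$ is $\{e_3\}\,|\,\{\infty, e_1, e_2\}$, which is non-essential. When $\gamma\cap\overline{U_\infty} = \emptyset$ this is immediate; otherwise I would analyze the crossings $\gamma\cap\partial U_\infty$ using the combinatorics of $R^{-1}(S^1)$ ($4$ vertices at the simple critical points $\pm 3^{1/4}e^{\pm i\pi/4}$, $8$ edges; $\partial U_\infty$ is a $4$-gon, among whose four edges only the one shared with $\partial U_2$ carries any point of $A$, namely $e_1,e_2$), together with the factorization $R = P_3\circ g\circ P_2$ and the fact that $R|_{U_\infty}$ is a degree-$6$ proper branched cover onto $\cbar\smm\overline{\DD}$ ramified only at $\infty$.

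The main obstacle is precisely this combinatorial step, because essential Jordan curves in $\cbar\smm B$ (with $\#B = 4$) form infinitely many isotopy classes per $2+2$ partition of $B$, so no finite enumeration of $\Gamma$ will suffice. The argument must be isotopy-invariant, controlled only by the pattern of $\Gamma\cap S^1$ and the skeleton of $R^{-1}(S^1)$; I anticipate that tracking how preimage arcs of $\Gamma$ enter and exit $\overline{U_\infty}$, using the branching of $R|_{U_\infty}$ at $\infty$ together with the edge structure of $\partial U_\infty$, will show that $\{\infty, e_1, e_2\}$ are never separated by any component $\gamma$.
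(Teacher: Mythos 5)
Your reduction via Theorem~A and your disposal of curves with $\G\cap S^1=\emptyset$ are fine (indeed any non-essential $\G$ in $\cbar\smm B$ is handled the same way), but the plan for the remaining case has two genuine problems. First, the intermediate claim you propose to prove --- that the triple $\{\infty,e_1,e_2\}$ is never separated by a component of $R^{-1}(\G)$, so that the only non-trivial partition arising is $\{e_3\}\mid\{\infty,e_1,e_2\}$ --- is false. Take $\G$ to be the boundary of a small regular neighborhood $N$ of the segment $[0,b]$ (this $\G$ meets $S^1$ near $b$, so it falls in your second case). As the paper's analysis of $\beta_0=[0,b]$ shows, one component of $R^{-1}(N)$ is a disk containing all three points of $E$ and not $\infty$; its boundary is a component of $R^{-1}(\G)$ realizing the partition $\{e_1,e_2,e_3\}\mid\{\infty\}$, which separates $e_1,e_2$ from $\infty$. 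The partition is $3{+}1$, so non-essentiality survives, but your proposed invariant does not, and once you weaken it to the actual statement (``no $2{+}2$ partition'') you are back to the full problem.

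Second, and more fundamentally, you correctly identify that infinitely many isotopy classes of $\G$ must be handled, but the ``isotopy-invariant tracking'' of crossings of $\partial U_\infty$ is only anticipated, not supplied, and it is unclear how a local analysis of the graph $R^{-1}(S^1)$ could control the global homotopy class of $\G$. The missing device is the monodromy reduction: every essential $\G$ is the boundary of a neighborhood of an arc $\beta$ from $b$ to one of $0,1,\infty$, and $\beta$ is homotopic rel endpoints in $\cbar\smm V_R$ to $\g\cdot\beta_i$ for one of three model arcs $\beta_i$ and some $\g\in\pi_1(\cbar\smm V_R,b)$; consequently $\#(Y\cap E)=\#(Y_i\cap\tau_{\g}^{-1}(E))$ for corresponding components $Y$, $Y_i$ of the preimages. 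This converts the infinite family of curves into a finite check, because the orbit of $E$ under $\mo(R,b)=\langle\rho_0,\rho_\infty\rangle$ consists of only eight sets $E_0,\dots,E_7$, and one verifies $\#(Y_i\cap E_k)\neq 2$ for the components not containing $\infty$ and $\#(Y_\infty\cap E_k)\neq 1$ for the one containing $\infty$. Without this step (or an equivalent finiteness mechanism) the proposal does not constitute a proof.
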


\begin{proof}
Denote $B=\{0,1,\infty,b\}$. We want to prove that for any essential Jordan curve $\G\subset\cbar\smm B$, each component of $R^{-1}(\G)$ is non-essential, or equivalently, for any closed arc $\beta$ joining $b$ and a critical value $0,1$ or $\infty$, each component of $R^{-1}(\beta)$ contains either at least three points of $A$ or at most one point of $A$.

Let $\beta_0$ be the line segment $[0,b]$. Then $R^{-1}(\beta_0)$ has $4$ components, one contains three points of $A$ and the others are disjoint from $A$.

Let $\beta_1\subset S^1$ be a closed arc joining the points $1$ and $b$. Then $R^{-1}(\beta_1)$ has $8$ components, all of them contains at most one point of $A$.

Let $\beta_{\infty}=\{bt, t\in [1,\infty]\}$. Then $R^{-1}(\beta_{\infty})$ has two components, one contains three points of $A$, another contains exactly one point of $A$.

\begin{figure}[htbp]
\centering
\includegraphics[width=12cm]{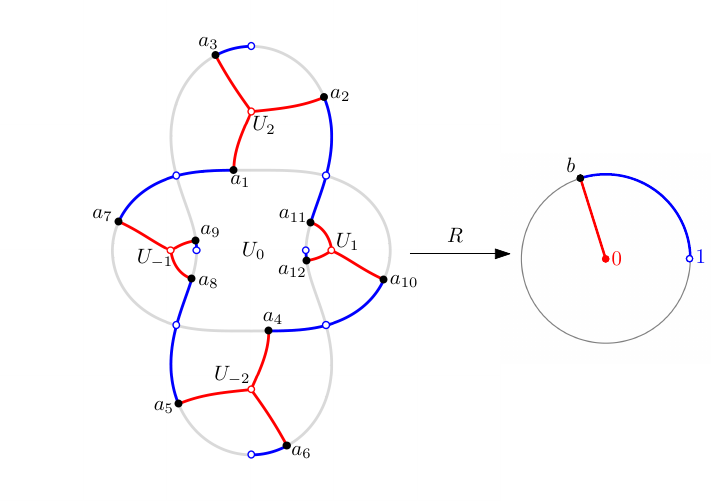}
\caption{Location of $E_k$.}
\end{figure}

For any closed arc $\beta$ joining $b$ and a critical values $0,1$ or $\infty$, there is an element $\gamma\in\pi_1(\cbar\smm V_R,b)$ such that $\beta$ is homotopic to $\g\cdot\beta_i$ ($i=0,1,\infty$) in $\cbar\smm V_R$ rel the endpoint $b$. Thus for each component $Y$ of $R^{-1}(\beta)$, let $Y_i$ be the component of $R^{-1}(\beta_i)$ such that $Y_i$ and $Y$ contains a common point in $R^{-1}(V_R)$, then $Y\cap R^{-1}(b)=\tau_{\g}(Y_i\cap R^{-1}(b))$. Therefore,
$$
\#(Y\cap E)=\#(\tau_{\g}(Y_i\cap R^{-1}(b))\cap E)=\#(Y_i\cap\tau_{\g}^{-1}(E)).
$$
We only need to prove that for any $\tau\in\mo(R,b)$ and any component $Y_i$ of $R^{-1}(\beta_i)$ ($i=0,1,\infty$) with $\infty\notin Y_i$, $\#(Y_i\cap\tau(E))\neq 2$, and for the component $Y_{\infty}$ of $R^{-1}(\beta_{\infty})$ with $\infty\in Y_{\infty}$, $\#(Y_{\infty}\cap\tau(E))\neq 1$.

Let $\rho_{\infty}\in\mo(R,b)$ be induced by $\g_{\infty}(t)=\{2be^{2\pi it}-b:\, 0\le t\le 1\}$. Then $\rho_{\infty}^6=\mathrm{id}$. Set $E_0=E$ and $E_k=\rho_{\infty}^k(E)$ for $1\le k\le 5$. Label the points in $R^{-1}(b)$ as in Figure 3. Then
$$
\begin{array}{l}
\rho_{\infty}:\,a_1\mapsto a_{11}\mapsto a_{12}\mapsto a_4\mapsto a_8\mapsto a_9\mapsto a_1, \\
\rho_{\infty}:\,a_2\mapsto a_{3}\mapsto a_{7}\mapsto a_5\mapsto a_6\mapsto a_{10}\mapsto a_2. \\
E_0=\{a_1,a_2,a_3\},E_1=\{a_{11},a_3,a_7\},E_2=\{a_{12},a_{7},a_5\},\\
E_3=\{a_4,a_5,a_6\},E_4=\{a_8,a_6,a_{10}\},E_5=\{a_9,a_{10},a_2\}.
\end{array}
$$

Let $\rho_0\in\mo(R,b)$ be the permutation induced by $\g_0(t)=\{2be^{2\pi it}/3+b/3:\, 0\le t\le 1\}$ . Then $\rho_0^3=\mathrm{id}$ and
$$
\begin{array}{l}
\rho_0(E_0)=E_0,\, \rho_0(E_3)=E_3, \\
\rho_0:\, E_1\to E_7:=\{a_{12},a_1,a_8\}\to E_5\to E_1, \\
\rho_0:\, E_2\to E_4\to E_6:=\{a_{11},a_9,a_4\}\to E_2, \\
\rho_{\infty}(E_6)=E_7,\, \rho_{\infty}(E_7)=E_6.
\end{array}
$$

Since $\mo(R,b)$ is generated by $(\rho_{\infty},\rho_0)$, we obtain
$$
\{\tau(E):\,\tau\in\mo(R,b)\}=\{E_0,E_1,\cdots,E_7\}.
$$
It is easy to check that for each component $Y_i$ of $R^{-1}(\beta_i)$ ($i=0,1,\infty$) with $\infty\notin Y_i$,
$$
\#(Y_i\cap E_k)\neq 2
$$
for $0\le k\le 7$, and for the component $Y_{\infty}$ of $R^{-1}(\beta_{\infty})$ with $\infty\in Y_{\infty}$,
$$
\#(Y_{\infty}\cap E_k)\neq 1.
$$
Now the proof is complete.
\end{proof}

\subsection{Stablizer of marked points}
In order to see whether a CTP map $(f,A)$ satisfies McMullen's condition or Saenz's condition, we consider the stabilizer of the $\mo(f,b)$. It is defined by
$$
\begin{array}{l}
\st(a)=\{\tau\in\mo(f,b):\, \tau(a)=a\}\text{ for }a\in f^{-1}(b), \\
\st(E)=\{\tau\in\mo(f,b):\, \tau(E)=E\}\text{ for }E\subset f^{-1}(b), \\
\st^*(E)=\bigcap_{a\in E}\st(a).
\end{array}
$$
By definition, $\st^*(E)\subset\st(a)\cap\st(E)$ if $a\in E$.

\begin{lemma}\label{stab1}
Let $(f,A)$ be a CTP map with regular set $E\neq\emptyset$. If it satisfies McMullen's condition, then $\st(a)=\st^*(E)$ for any point $a\in E$.
\end{lemma}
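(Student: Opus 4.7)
The inclusion $\st^*(E)\subset\st(a)$ is immediate from the definitions, so the lemma amounts to the reverse inclusion. The plan is to exploit the factorization $f=g\circ s$ provided by McMullen's condition and the rigidity of the monodromy of the Belyi map $s$. The main obstacle I anticipate is a counting step that pins down the structure of $s$ on $E$: I will need both $\#V_s=2$ and $\#s(E)=1$, neither of which is forced by McMullen's condition alone, only in combination with $E\neq\emptyset$. Once that rigidity is in place, the rest is essentially a formal lifting computation through $g$ and then $s$.

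For the first step, any $a\in E$ satisfies $f(a)\notin V_f\supset g(V_s)$, so $g(s(a))\notin g(V_s)$ and hence $s(a)\notin V_s$; therefore $s(E)\cap V_s=\emptyset$. Because any rational map of degree $\ge 2$ has $\#V_s\ge 2$ (otherwise Riemann--Hurwitz forces $\#s^{-1}(v)=2-\deg s\le 0$ for the unique critical value $v$), combining this with $\#(s(A)\cup V_s)=3$ forces $\#V_s=2$ and $s(E)$ to be a single point $c\notin V_s$, with $\#s^{-1}(c)=\deg s$. Now $\cbar\smm V_s$ is a twice-punctured sphere with $\pi_1\cong\ZZ$, and the covering $s:\cbar\smm s^{-1}(V_s)\to\cbar\smm V_s$ is connected, so the monodromy of $s$ acts on $s^{-1}(c)$ as a cyclic group of order $\deg s$ generated by a single $(\deg s)$-cycle. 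In particular every non-identity element acts freely on $s^{-1}(c)$.

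With this in hand, take $\tau=\tau_\gamma\in\st(a)$ represented by a loop $\gamma$ at $b=g(c)$ in $\cbar\smm V_f$. Since $V_f=V_g\cup g(V_s)$ and $V_s\subset g^{-1}(g(V_s))\subset g^{-1}(V_f)$, the $g$-lift $\tilde\gamma$ of $\gamma$ starting at $c$ automatically avoids $V_s$ and so admits an $s$-lift. The $s$-lift of $\tilde\gamma$ starting at $a$ coincides with the $f$-lift of $\gamma$ starting at $a$, which ends at $a$ by assumption; hence $\tilde\gamma$ is a loop at $c$, and its $s$-monodromy fixes the point $a\in s^{-1}(c)$. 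By the freeness just established, this $s$-monodromy is the identity on $s^{-1}(c)\supset E$, so repeating the two-step lift starting at an arbitrary $a'\in E$ yields $\tau_\gamma(a')=a'$, whence $\tau\in\st^*(E)$.
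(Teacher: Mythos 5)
Your proposal is correct and follows essentially the same route as the paper: deduce $\#V_s=2$ and $\#s(E)=1$ from McMullen's condition together with $E\neq\emptyset$, then observe that a closed $f$-lift of $\gamma$ at $a$ projects under $s$ to a closed $g$-lift whose $s$-monodromy (cyclic, acting freely on $s^{-1}(s(a))$ since the base is twice-punctured) must be trivial, so all $f$-lifts at points of $E$ close up. Your write-up merely makes explicit two points the paper leaves implicit, namely that $s(E)$ is a single point (needed so the same $g$-lift $\tilde\gamma$ serves every $a'\in E$) and that $\tilde\gamma$ avoids $V_s$ so it can be lifted under $s$.
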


\begin{proof}
By definition, there is a Belyi map $s$ and a rational map $g$ such that $f=g\circ s$ and $\#(s(A)\cup V_s)=3$. Since $E\neq\emptyset$ and $s(E)\notin V_s$, we have $\#V_s=2$.

For each point $a\in E$ and any $\g\in\pi_1(\cbar\smm V_f,b)$ with $\tau_{\g}\in\st(a)$, the lift $\delta$ of $f^{-1}(\g)$ starting from $a$ is a closed curve. Thus $s(\delta)$ is also a closed curve. Since $\#V_2=2$, every lift of $s(\delta)$ under $s$ is a closed curve. Thus $\tau_{\g}(a')=a'$ for any $a'\in E$.
\end{proof}

\begin{lemma}\label{stab2}
Let $(f,A)$ be a regular CTP map satisfying Saenz's condition. Then for each point $a\in A$,
\begin{itemize}
\item[(1)] $\st(a)\smm\st^*(A)\neq\emptyset$,
\item[(2)] $\st(a)\subset\st(A)$, and
\item[(3)] for any $\tau\in\st(a)$, $\tau^3\in\st^*(A)$.
\end{itemize}
\end{lemma}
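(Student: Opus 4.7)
\noindent\emph{Proof plan.} By Saenz's condition, write $f=g\circ S$ with $S$ the polynomial of Theorem~\ref{Saenz} and $A=S^{-1}(b_0)$ for some $b_0\notin V_S$. Regularity forces $g(b_0)\notin V_f$, and by Lemma~\ref{marked1} the common image $b:=f(A)=g(b_0)$ is a single point, so $\mo(f,b)$ is well-defined on $f^{-1}(b)$. The plan is to translate $\st(a)\subset\mo(f,b)$ into an $S$-monodromy stabilizer acting on $A=S^{-1}(b_0)$, and then exploit the fact that $\mo(S,b_0)=A_4$ with point-stabilizers cyclic of order $3$.

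First I would factor the $f$-lift of any $\gamma\in\pi_1(\cbar\smm V_f,b)$ through the $g$-lift $\tilde\gamma$ starting at $b_0\in g^{-1}(b)$, followed by the $S$-lift starting at $a\in A$. If $\tau_\gamma(a)=a$, then the $S$-lift is a loop at $a$, which forces $\tilde\gamma$ to end at $S(a)=b_0$; and since $V_S\subset g^{-1}(V_f)$, the loop $\tilde\gamma$ represents a class in $\pi_1(\cbar\smm V_S,b_0)$. Running the same factorization from every $a'\in A=S^{-1}(b_0)$ places $\tau_\gamma(a')$ in $S^{-1}(b_0)=A$, which is part~(2). The assignment $\tau_\gamma\mapsto\tau_{\tilde\gamma}$ then defines a homomorphism from $\st(a)$ into the $S$-stabilizer of $a$ in $\mo(S,b_0)$. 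I would next verify that this homomorphism is surjective: for each $c\in V_S$ pick a small loop $\delta_c$ at $b_0$ encircling $c$ alone and lying in $\cbar\smm g^{-1}(V_f)$ (possible since $g^{-1}(V_f)$ is finite); then $g\circ\delta_c$ is a loop at $b$ in $\cbar\smm V_f$ whose $g$-lift at $b_0$ is exactly $\delta_c$, and the classes $[\delta_c]$ generate $\pi_1(\cbar\smm V_S,b_0)$.

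Finally I would compute $\mo(S,b_0)$ directly. Because $\deg S=4$, $V_S=\{0,1,\infty\}$, and over each critical value $S$ has a single ramified preimage of local degree $3$ together with one unramified preimage, the monodromy generator around each critical value is a $3$-cycle fixing the corresponding unramified preimage. Three such $3$-cycles with trivial product generate a transitive subgroup of $A_4$ (transitivity comes from the connectedness of $S$), and the only transitive subgroup of $A_4$ containing a $3$-cycle is $A_4$ itself; hence the stabilizer of $a$ in $\mo(S,b_0)$ is cyclic of order $3$, generated by a $3$-cycle that cycles the three points of $A\smm\{a\}$. Part~(3) then follows because every $\tau\in\st(a)$ acts on $A$ through this cyclic group of order $3$, so $\tau^3$ fixes every point of $A$ and therefore lies in $\st^*(A)$; and part~(1) follows by pulling back the non-trivial generator through the surjection of the second paragraph, producing some $\tau\in\st(a)$ that moves two points of $A\smm\{a\}$. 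The main technical obstacle is the surjectivity claim, which hinges on realizing each local generator of $\pi_1(\cbar\smm V_S,b_0)$ as a genuine $g$-lifted loop; the factorization, the ramification data of $S$, and the group-theoretic identification are comparatively routine.
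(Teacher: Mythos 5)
Your proof is correct, and for parts (1) and (3) it takes a genuinely different route from the paper's. The shared core is your factorization step, which is exactly how the paper proves (2): for $\tau_\gamma\in\st(a)$ the $g$-lift of $\gamma$ at $b_0$ closes up (it ends at $S(\tau_\gamma(a))=b_0$), so $\tau_\gamma$ preserves $A=S^{-1}(b_0)$ and restricts on $A$ to the $S$-monodromy of a loop in $\cbar\smm V_S$ based at $b_0$. Where you diverge is in how you exploit this restriction homomorphism. The paper proves (1) by an explicit construction --- a simple loop separating the critical values of $S$, pushed forward by $g$ and conjugated along an arc to move its unique fixed point to $a$ --- and proves (3) by noting that $\tau|_A$ has one or two fixed points and then excluding the transposition case with a conformal-rigidity argument: a M\"obius map fixing $a,a_1$ and swapping $a_2,a_3$ forces the cross-ratio to be $-1$, contradicting the limit $e^{\frac{\pi i}{3}}$ as $S(A)\to 0$. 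You instead compute $\mo(S,b_0)=A_4$ from the ramification profile $(3,1)$ over each of $0,1,\infty$ (three $3$-cycles generating a transitive subgroup of $S_4$ must generate $A_4$), so that $\st(a)$ surjects onto the point stabilizer $C_3$; this delivers (1) and (3) simultaneously and disposes of the transposition case for free, since $A_4$ contains no transpositions. Your surjectivity step, which you flag as the main obstacle, is in fact unproblematic: $b_0\notin g^{-1}(V_f)$ by regularity, and $\pi_1(\cbar\smm g^{-1}(V_f),b_0)\to\pi_1(\cbar\smm V_S,b_0)$ is onto because only finitely many additional punctures are removed, so every element of $\mo(S,b_0)$ is realized by a loop $\delta$ with $g\circ\delta\subset\cbar\smm V_f$. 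Your argument is cleaner and purely combinatorial; the paper's cross-ratio argument is less elegant here but is the one that transfers to the mixing example $(R,A)$ of \S4.2 (see the proof of Lemma \ref{stab3}(b)), where the monodromy group is larger and the group theory alone would not rule out transpositions.
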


\begin{proof}
By definition, there is a rational map $g$ such that $f=g\circ S$ and $S(A)$ is a single point, where $S$ is the rational map defined in \S4.1.

(1) Let $\alpha:\,[0,1]\to\cbar\smm(V_S\cup g^{-1}(V_g))$ be a simple loop with $\alpha(0)=\alpha(1)=S(A)$ such that both components of $\cbar\smm\alpha$ contain critical values of $S$. Then one lift $\delta$ of $\alpha$ under $S$ has common endpoint $a'\in A$ and the others have distinct endpoints. Let $\delta':\,[0,1]\to\cbar\smm f^{-1}(V_f)$ be a curve with $\alpha'(0)=a$ and $\alpha'(1)=a'$. Let $\tau\in\mo(f)$ be the permutation induced by $(f(\delta'))^{-1}\cdot g(\alpha)\cdot f(\delta')$. Then $\tau(A)=A$ and $\tau$ has a unique fixed point $a\in A$, i.e., $\tau\in\st(a)\smm\st^*(A)$.

(2) For any $\tau\in\st(a)$, there exists $\g\in\pi_1(\cbar\smm V_f, f(A))$ such that $\tau_{\g}=\tau$ and the lift $\delta$ of $\g$ under $f$ staring from $a$ is a closed curve. Thus $S(\delta)$ is a closed curve and hence $\tau\in\st(A)$.

(3) Suppose $\tau\in\st(a)\smm\st^*(A)$. Since $\#A=4$, $\tau$ has either one or two fixed points. In the first case, $\tau^3\in\st^*(A)$. The second case cannot happen. Otherwise, there exists a point $a_1\in A$ with $a_1\neq a$ such that $\tau(a_1)=a_1$ and $\tau$ commutes the other two points $a_2,a_3$ of $A$. Since $(S,A)$ is a CTP map, there is a conformal map $\lambda$ of $\cbar$ such that $\lambda(a)=a$, $\lambda(a_1)=a_1$, $\lambda(a_2)=a_3$ and $\lambda(a_3)=a_2$. This implies that the cross-ratio of $(a,a_1,a_2,a_3)$ is equal to $-1$. This is impossible since the cross-ratio tends to $e^{\frac{\pi i}{3}}$ as $S(A)\to 0$.
\end{proof}

\begin{lemma}\label{stab3}
Let $(R,A)$ be the CTP map defined in \S4.2. Then for each point $a\in E$,
\begin{itemize}
\item[(a)] $\st(a)\smm\st^*(E)\neq\emptyset$,
\item[(b)] $\st^*(E)=\st(a)\cap\st(E)$, and
\item[(c)] for any $\tau\in\st(a)$, $\tau^2\in\st^*(E)$.
\end{itemize}
\end{lemma}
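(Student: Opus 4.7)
My plan is to establish parts (a), (b), (c) in turn, with (c) following from (b) via the orbit data computed in Theorem \ref{mixing}.

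For (a), I would use that the loop around $1$ in $\pi_1(\cbar\smm V_R,b)$ realizes $\rho_1=\rho_0^{-1}\rho_\infty$ in $\mo(R,b)$. From the cycle structures of $\rho_\infty$ and $\rho_0$ given in the proof of Theorem \ref{mixing}, a direct calculation shows
\[
\rho_1=(a_1,a_{10})(a_3,a_9)(a_4,a_7)(a_6,a_{12}),
\]
an involution with fixed-point set $\{a_2,a_5,a_8,a_{11}\}$. Since $\rho_1(a_2)=a_2$ while $\rho_1(a_1)=a_{10}\notin E$, one has $\rho_1\in\st(a_2)\smm\st^*(E)$. For $a=a_1$ or $a=a_3$, the conjugates $\rho_0^{-1}\rho_1\rho_0$ and $\rho_0\rho_1\rho_0^{-1}$ translate the fixed point $a_2$ to $a_1$ or $a_3$ respectively, providing the needed witnesses in $\st(a)\smm\st^*(E)$.

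For (b), the inclusion $\st^*(E)\subset\st(a)\cap\st(E)$ is trivial, so I would prove the reverse by contradiction. Suppose $\tau\in\st(a)\cap\st(E)\smm\st^*(E)$; since $\#E=3$ and $\tau(a)=a$, the element $\tau$ swaps the other two points. WLOG $a=a_1$ and $\tau\colon a_2\leftrightarrow a_3$. Because $\infty$ is pointwise fixed by the isotopy of Lemma \ref{move}, $\tau(A)=A$. Imitating the cross-ratio step in Lemma \ref{stab2}(3), CTP forces a M\"obius map $\lambda$ of $\cbar$ with $\lambda|_A=\tau|_A$: the constant value $[\mathrm{id}_A]$ of $\sigma_{R,A,B}$ equals $[\psi_1]$ for the lift $\psi_1$ from Lemma \ref{move}, so $\psi_1$ is isotopic rel $A$ to such a $\lambda$. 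The conditions $\lambda(a_1)=a_1,\lambda(\infty)=\infty,\lambda(a_2)=a_3,\lambda(a_3)=a_2$ then force $\lambda(z)=-z+2a_1$ and the midpoint identity
\[
2a_1=a_2+a_3.
\]
By Lemma \ref{move} this identity persists for all $b\in S^1\smm\{1\}$.

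To derive the contradiction, I would analyze $R|_{\partial U_2}\colon\partial U_2\to S^1$. At each simple critical vertex $v$ of $R$ on $\partial U_2$, the two local branches of $\partial U_2$ enter sectors whose $R$-images trace opposite sides of $1$ on $S^1$, so $R|_{\partial U_2}$ is an unramified degree-$3$ cover whose fibre over $1$ consists of the two critical vertices $v_1,v_2$ on $\partial U_2$ together with one non-critical preimage $w$. Parameterizing $\partial U_2\cong S^1$ so that $R|_{\partial U_2}(\theta)=3\theta$ with $(v_1,v_2,w)$ at $\theta=0,\tfrac{2\pi}{3},\tfrac{4\pi}{3}$ and labelling $a_1$ as the preimage in $\partial U_0\cap\partial U_2$, the three labelled preimages of $b=e^{it}$ sit at $\theta=t/3,\,t/3+\tfrac{2\pi}{3},\,t/3+\tfrac{4\pi}{3}$. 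The two one-sided limits give $(a_1,a_2,a_3)\to(v_1,v_2,w)$ as $t\to 0^+$ and $(a_1,a_2,a_3)\to(v_2,w,v_1)$ as $t\to 2\pi^-$. Substituting each into the midpoint identity yields $2v_1-v_2=w=2v_2-v_1$, so $v_1=v_2$, contradicting distinctness of the two critical vertices on $\partial U_2$.

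For (c), the explicit orbit list in the proof of Theorem \ref{mixing} shows each $a\in E$ belongs to exactly two of $E_0,\dots,E_7$ (for instance $a_1\in E_0\cap E_7$). Hence any $\tau\in\st(a)$ permutes this $2$-element sub-collection and satisfies $\tau^2(E)=E$; combined with (b), this gives $\tau^2\in\st(a)\cap\st(E)=\st^*(E)$. The main difficulty is (b): producing the M\"obius $\lambda$ from the CTP hypothesis and then verifying, via the local geometry of $R^{-1}(S^1)$, that the two one-sided limits on $\partial U_2$ produce genuinely different cyclic orderings of $(v_1,v_2,w)$, which is what forces the contradiction.
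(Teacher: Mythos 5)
Your proposal is correct, and parts (a) and (c) follow the paper's own route: (a) exhibits explicit words in $\rho_0,\rho_\infty$ fixing one point of $E$ but not the others (your element $\rho_0^{-1}\rho_\infty=(a_1\,a_{10})(a_3\,a_9)(a_4\,a_7)(a_6\,a_{12})$ checks out against the cycle data for $\rho_\infty$ and $\rho_0$ recorded in the proof of Theorem \ref{mixing}, as do its conjugates by $\rho_0^{\pm1}$), and (c) is exactly the paper's counting argument that each point of $E$ lies in precisely two of $E_0,\dots,E_7$. Where you genuinely diverge is the endgame of (b). Both you and the paper use the CTP hypothesis to produce a M\"obius involution $\lambda$ fixing $\infty$ and $a$ and swapping the other two points of $E$, which is equivalent to the cross-ratio being $-1$, i.e.\ your midpoint identity $2a_1=a_2+a_3$. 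The paper then refutes this by evaluating the cross-ratio: it tends to $e^{\pi i/3}$ as $R(E)\to 0$ (and equals $e^{\pi i/3}$ identically, by the radical computation in the appendix). You instead degenerate toward the other critical value, letting $b=e^{it}$ tend to $1$ from both sides of $S^1$: since $R|_{\partial U_2}$ is an unramified degree-$3$ cover of $S^1$ whose fibre over $1$ is $\{v_1,v_2,w\}$ with the $\partial U_0$-arc lying between the two critical vertices, the two one-sided limits of the midpoint identity give $2v_1=v_2+w$ and $2v_2=w+v_1$, forcing $v_1=v_2$. This is a nice trade: your argument avoids any explicit computation with nested radicals and works purely from the combinatorial/topological structure of $R^{-1}(S^1)$, at the cost of having to justify (i) the WLOG reduction to $a=a_1$ (which holds because $\rho_0$ cyclically permutes $E$, so one can conjugate the offending $\tau$), and (ii) the persistence of the identity for all $b\in S^1\smm\{1\}$ (which holds because transporting $\tau$ along an arc of $S^1\smm\{1\}$ keeps the distinguished fixed point on the $\partial U_0$-arc and Lemma \ref{move} preserves the CTP property); both points are true and you gesture at them, but they deserve a sentence each in a final write-up.
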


\begin{proof}
(a) Using the notations in the proof of Theorem \ref{mixing}, we have $E=\{a_1,a_2,a_3\}$ and
$$
\begin{array}{rl}
\rho_0\cdot\rho_{\infty}\cdot\rho_0: & (a_3,a_1,a_2)\mapsto(a_{12},a_1,a_{8}),\,a_1\mapsto a_1, \\
\rho_{\infty}^{-1}\cdot\rho_0: & (a_3,a_1,a_2)\mapsto(a_9,a_{10},a_{2}), \,a_2\mapsto a_2,\\
\rho_{\infty}\cdot\rho_0^{-1}: & (a_2,a_3,a_1)\mapsto(a_{11},a_{3},a_{7}),\, a_3\mapsto a_3.
\end{array}
$$
Thus $\st(a)\smm\st(E)\neq\emptyset$ for each point $a\in E$.

(b) Suppose $\tau\in\st(a)\cap\st(E)$. Assume by contradiction that $\tau\notin\st^*(E)$. Then $\tau$ commutes the other two points of $E$. Since $(R,A)$ is a CTP map, there is a conformal map $\lambda$ of $\CC$ such that $\lambda(a)=a$ and $\lambda$ commutes the other two points of $E$. This implies that the cross-ratio of the four points of $A$ is equal to $-1$. This is impossible since the cross-ratio tends to $e^{\frac{\pi i}{3}}$ as $R(E)$ tends to zero. Thus $\tau\in\st^*(E)$.

(c) One may check that each point of $E$ appears exactly in two distinct sets within $\{\tau(E):\,\tau\in\mo(R,b)\}$. Thus for any $\tau\in\st(a)$, either $\tau(E)=E$ or $\tau^2(E)=E$. In the former case $\tau\in\st(a)\cap\st(E)$. Thus $\tau\in\st^*(E)$ by the statement (b). In the latter case $\tau^2(E)=E$. Thus $\tau^2\in\st^*(E)$ by the same reason.
\end{proof}

\begin{corollary}
$(R,A)$ does not satisfy McMullen's condition and Saenz's condition.
\end{corollary}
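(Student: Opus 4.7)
I plan to prove both non-implications by contradiction, drawing on Lemmas \ref{stab1}--\ref{stab3}.

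For McMullen's condition: if $(R,A)$ satisfied it, then since $E \neq \emptyset$ Lemma \ref{stab1} would give $\st(a) = \st^*(E)$ for each $a \in E$. This is immediately contradicted by Lemma \ref{stab3}(a), which exhibits a permutation in $\st(a) \smm \st^*(E)$.

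For Saenz's condition: unpacking the definition in \S4.1, if $(R,A)$ satisfied it then $R = g \circ S$ for some rational map $g$ with $A = S^{-1}(c)$ for some $c \in \cbar \smm V_S$. This forces $R(A) = \{g(c)\}$ to consist of a single point. But by the construction of \S4.2, $R(E) = \{b\}$ with $b \in S^1 \smm \{1\}$ and $R(\infty) = \infty$, so $R(A) = \{b,\infty\}$ contains two distinct points, a contradiction.

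The main obstacle is choosing the right argument for the Saenz direction. A tempting parallel to the McMullen case would combine Lemma \ref{stab3}(c) (giving $\tau^2 \in \st^*(E)$ for $\tau \in \st(a)$) with Lemma \ref{stab2}(3) (giving $\tau^3 \in \st^*(A) \subset \st^*(E)$), so that $\gcd(2,3)=1$ forces $\tau \in \st^*(E)$ and contradicts Lemma \ref{stab3}(a). However, Lemma \ref{stab2} is stated for \emph{regular} CTP maps, whereas $(R,A)$ is mixing ($\infty \in A \smm E$ with $R(\infty) \in V_R$). The direct image-counting argument on $R(A)$ above avoids this regularity mismatch entirely.
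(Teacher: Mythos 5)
Your proof is correct. The McMullen half is exactly the route the paper sets up: Lemma \ref{stab1} forces $\st(a)=\st^*(E)$ for a CTP map with nonempty regular set satisfying McMullen's condition, while Lemma \ref{stab3}(a) exhibits an element of $\st(a)\smm\st^*(E)$ for $(R,A)$, and the hypotheses of Lemma \ref{stab1} are met since $E=\partial U_2\cap R^{-1}(b)\neq\emptyset$. For the Saenz half you depart from what the juxtaposition of Lemmas \ref{stab2} and \ref{stab3} suggests (a comparison of stabilizer properties, e.g.\ playing $\tau^3\in\st^*(A)$ against $\tau^2\in\st^*(E)$), and you are right to flag the obstruction: Lemma \ref{stab2} is proved only for \emph{regular} CTP maps, whereas $(R,A)$ is mixing, so that comparison is not directly licensed. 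Your replacement argument is both simpler and airtight: a map of Saenz's form has marked set $A=S^{-1}(c)$ with $c\notin V_S$, so $f(A)=\{g(c)\}$ is a single point, whereas $R(A)=\{b,\infty\}$ has two points; since $\#f(A)$ is invariant under equivalence of marked rational maps, no identification can repair this. (Equivalently, a single-point image forces a Saenz-form map to be either regular or branched, never mixing.) What your route buys is an elementary, purely set-theoretic obstruction that bypasses the monodromy computations of \S4.3 entirely for the Saenz direction; what the stabilizer route would buy, if one first extended Lemma \ref{stab2} beyond the regular case, is a criterion that could also distinguish $(R,A)$ from Saenz-type maps even after re-marking. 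As written, your argument is complete and I would accept it.
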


\section{CTP Belyi polynomials}
In this section, we will prove Theorem \ref{main} for CTP Belyi polynomials.

\subsection{Branched tree and monodromy}
In this paper, by a {\bf tree}, we mean a finite and connected graph in $\CC$ without loops. Here are some definations and notations used later.

Let $T\subset\CC$ be a tree. For each vertex $c\in T$, its {\bf local degree} $\deg_c T$ is the number of components of $T\smm\{c\}$. It is called an {\bf endpoint} of $T$ if $\deg_cT=1$, or a {\bf branched} vertex of $T$ if $\deg_cT\ge 3$. Two distinct edges of $T$ are {\bf adjacent} if they have common endpoint.

For any collection $\{a_1,\cdots,a_n\}$ ($n\ge 2$) of distinct edges or vertices of $T$, denote by $T[a_1,\cdots,a_n]\subset T$ the minimal subtree of $T$ containing them, i.e., for any subtree $T'\subset T$ containing $\{a_1,\cdots,a_n\}$, $T[a_1,\cdots, a_n]\subset T'$. Denote $|T(a_1,\cdots,a_n)|$ the number of edges in $T[a_1,\cdots,a_n]$ and
$$
T(a_1,\cdots,a_n)=T[a_1,\cdots,a_n]\smm\{\text{endpoints of }T[a_1,\cdots,a_n]\}.
$$

\vskip 0.24cm
Let $f$ be a polynomial with $\#V_f=3$. Let $v_0, v_1$ be the two finite critical values of $f$ and $I\subset\CC$ be a closed arc joining $v_0$ and $v_1$. Then $T=f^{-1}(I)$ is a tree with vertex set $f^{-1}(V_f)\smm\{\infty\}$. We call it a {\bf branched tree} of $f$.

The monodromy group $\mo(f)$ acts on the edges of $T$ as the following: Given a point $b\in I\smm V_f$. For any $\tau\in\mo(f)$ and each edges $e$ of $T$, $\tau(e)$ is the edge containing $\tau(e\cap f^{-1}(b))$.

\begin{figure}[htbp]
\centering
\includegraphics[width=6cm]{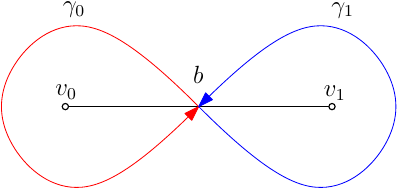}
\caption{The simple closed curve $\g_i$.}
\end{figure}

Let $\rho_i\in\mo(f)$ ($i=0,1$) be the permutation induced by a simple closed curve $\g_i\subset\CC\smm V_f$ such that $\g_i\cap I=\{b\}$ and the bounded component of $\cbar\smm\g_i$ contains exactly one point $v_i$ of $V_f$ (see Figure 4). Then $\mo(f)$ is generated by $\{\rho_0,\rho_1\}$. In particular, if $e,e'$ are edges of $T$ with common endpoint $c\in f^{-1}(v_i)$ and $\rho_i^{k}(e)=e$, then $k/\deg_c f$ is an integer and hence $\rho_i^{k}(e')=e'$.

Define a norm on $\mo(f)$ by $|\tau|=0$ if $\tau=\mathrm{id}$, and
$$
|\tau|=\inf\{n:\,\tau=\rho_{i_n}^{k_n}\cdots \rho_{i_1}^{k_1}\,(k_j\in\ZZ, i_j\in\{0,1\}\}.
$$
Then $|\tau_1\cdot\tau_0|\le|\tau_1|+|\tau_0|$. For each edge $e$ of $T$, $|T(e,\tau(e))|\le|\tau|+1$. Conversely, for any two edges $e,e'$ of $T$, there exists a unique $\tau\in\mo(f)$ such that $\tau(e)=e'$ and $|\tau|=|T(e,e')|-1$.

\begin{lemma}\label{chase}
For any two distinct edges $e$ and $e'$ of $T$, there exists $\tau\in\mo(f)$ such that
\begin{itemize}
\item[(1)] $|T[e,\tau(e)]|=|\tau|+1$,
\item[(2)] $\tau(e)$ and $\tau(e')$ have a common endpoint $c$.
\item[(3)] $T[e,\tau(e)]\cap T[e',\tau(e')]=\{c\}$.
\end{itemize}
\end{lemma}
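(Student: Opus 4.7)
My plan is to prove the lemma by induction on $N := |T(e,e')|$, the edge-count of the minimal subtree of $T$ containing $e$ and $e'$. For the base case $N = 2$, the edges $e$ and $e'$ share a single common vertex $c$; the choice $\tau = \mathrm{id}$ satisfies all three conditions under the natural convention $T[e,e] = \{e\}$.

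For the inductive step $N \ge 3$, I would write $T[e,e']$ as a path $e = f_0, f_1, \ldots, f_{N-1} = e'$ with intermediate vertices $c_j = f_{j-1} \cap f_j$, which lie alternately in $f^{-1}(v_0)$ and $f^{-1}(v_1)$. Apply the induction hypothesis to the shorter pair $(f_1, e')$ to produce $\tau' \in \mo(f)$ and a separating vertex $c$ satisfying the three conclusions for $(f_1,e')$. Let $\sigma \in \mo(f)$ be the unique length-one rotation around $c_1$ sending $f_0$ to $f_1$ (a power of $\rho_{i(1)}$, where $c_1 \in f^{-1}(v_{i(1)})$), and set $\tau := \tau' \sigma$; the candidate output will be $(\tau, c)$.

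Condition (1) should follow because the geodesic decomposition of $\tau'$ supplied by the induction begins with a power of $\rho_{i(2)}$, where $i(2) \ne i(1)$ by the alternation of vertex types along the path, so the product $\tau'\sigma$ admits no cancellation: $|\tau| = |\tau'| + 1$, and together with $|T[e,\tau(e)]| = |T[f_1,\tau'(f_1)]| + 1$ this yields (1). Conditions (2) and (3) transfer directly from the induction hypothesis in the clean case where $\sigma$ fixes $e'$: the single edge $f_0$ that gets prepended to $T[f_1,\tau'(f_1)]$ lies on the $e$-side of $c$ and does not enlarge the intersection with $T[e',\tau'(e')]$.

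The main obstacle will be the case where $\sigma$ does not fix $e'$: because $\sigma$ is a global rotation at every vertex of $f^{-1}(v_{i(1)})$, it may move $e'$ to a neighbouring edge at some far vertex. The resolution I expect is to show that $\sigma(e')$ still lies in the same component of $T\smm\{c\}$ as $e'$, so that the separation required for (3) survives. This step should leverage both the fact that $\sigma$ permutes edges only within the local star at each $f^{-1}(v_{i(1)})$-vertex while preserving the tree-component structure elsewhere, and the uniqueness of length-minimal monodromy elements stated immediately before the lemma.
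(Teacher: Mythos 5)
There is a genuine gap, and it sits exactly where you flag the ``main obstacle.'' Writing $\tau=\tau'\sigma$, the second marked edge is sent to $\tau(e')=\tau'(\sigma(e'))$, whereas the induction hypothesis applied to the pair $(f_1,e')$ only controls $\tau'(e')$. When $\sigma(e')\neq e'$ these are different edges, and your proposed repair --- showing that $\sigma(e')$ lies in the same component of $T\smm\{c\}$ as $e'$ --- is far too weak: condition (2) requires $\tau'(\sigma(e'))$ to actually share the vertex $c$ with $\tau'(f_1)$, and membership in a component gives no adjacency. The natural alternative, applying the induction hypothesis to the pair $(f_1,\sigma(e'))$, breaks the induction itself: since every edge of $T$ has one endpoint over $v_0$ and one over $v_1$, $\sigma=\rho_{i(1)}^{k}$ rotates $e'$ about its $f^{-1}(v_{i(1)})$-endpoint, and when that endpoint is the far endpoint of $e'$ (which happens for half the parities of $|T[e,e']|$) one gets $|T[f_1,\sigma(e')]|=|T[e,e']|$, so the induction parameter need not decrease. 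A smaller issue of the same kind affects your proof of (1): the inductive hypothesis does not guarantee that $\tau'$ admits a geodesic word beginning with a power of $\rho_{i(2)}$, nor even that $T[f_1,\tau'(f_1)]$ points away from $e$, so the ``no cancellation'' step would require a strengthened induction statement.

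The paper's proof uses the same walking idea but organizes it so that these problems do not arise: it iterates $e_j=\tau_j(e_{j-1})$, $e'_j=\tau_j(e'_{j-1})$ with $|\tau_j|=1$, always moving $e_{j-1}$ one edge toward the \emph{current} position $e'_{j-1}$, and it tracks the entire swept subtree $T[e',e'_1,\dots,e'_j]$ rather than a single image of $e'$. The maintained invariant is that this swept subtree and $T[e,e_{j-1}]$ lie in distinct components of $T\smm\{e_j\}$, and termination comes not from a decreasing distance but from the strict growth $|T[e,e_j]|=j+1$ inside a finite tree; when the two subtrees first meet, they meet in a single vertex, which is forced to be a common endpoint of $e_n$ and $e'_n$. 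If you want to salvage your induction, you would have to build both the directionality of the first move and control of the whole $\mo(f)$-orbit of $e'$ into the inductive statement --- at which point you have essentially reconstructed the paper's iteration.
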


\begin{proof}
If $e$ and $e'$ are adjacent, then the lemma holds for $\tau=\mathrm{id}$. Now we assume $|T[e,e']|\ge 3$. Let $e_1\subset T[e,e']$ be the edge adjacent to $e$. Then there exists $\tau_1\in\mo(f)$ such that $|\tau_1|=1$ and $\tau_1(e)=e_1$. Denote $e'_1=\tau_{1}(e')$. Then $T(e',e'_1)$ and $e$ are contained in the two distinct components of $T\smm e_1$ (see Figure 5). Either $T[e,e_1]\cap T[e',e'_1]=\emptyset$ or $T[e,e_1]\cap T[e',e'_1]$ is a vertex $c$ of $T$. In the latter case, $c$ is the common endpoint of $e_1$ and $e'_1$. So $\tau_{1}$ satisfies the conditions of the lemma.

\begin{figure}[htbp]
\centering
\includegraphics[width=10cm]{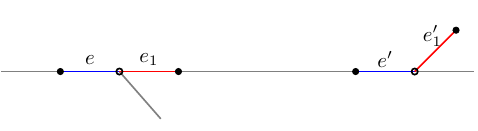}
\caption{Chase of two edges}
\end{figure}

Now we assume  $T[e,e_1]\cap T[e',e'_1]=\emptyset$. Let $e_2\subset T[e_1,e'_1]$ be the edge adjacent to $e_1$. Since $T(e',e'_1)$ and $e$ are contained in distinct components of $T\smm e_1$, we have $|T[e,e_2]|=3$. There exists $\tau_2\in\mo(f)$ such that $|\tau_2|=1$ and $\tau_{2}(e_1)=e_2$. Denote $e'_2=\tau_{2}(e'_1)$. Then $T(e',e_1',e_2')$ and $T(e,e_1)$ are contained in the two distinct components of $T\smm\{e_2\}$. Either $T[e,e_2]\cap T[e',e'_1,e'_2]=\emptyset$ or $T[e,e_2]\cap T[e,e'_1,e'_2]$ is a vertex $c$ of $T$. In the latter case, $c$ is the common endpoint of $e_2$ and $e'_2$. So $\tau_2\cdot\tau_{1}$ satisfies the conditions of the lemma.

Continuing this process successfully, we obtain a sequence $\{\tau_j\}$ in $\mo(f)$ such that $|\tau_j|=1$, $|T[e,e_j]|=j+1$, $T(e',\cdots,e_{j}')$ and $T(e,e_{j-1})$ are contained in the two distinct components of $T\smm\{e_{j}\}$, where $e_j=\tau_j(e_{j-1})$ and $e'_j=\tau_j(e'_{j-1})$. Since $T$ is a finite tree, this process must stop at some step $n\ge 2$. Thus $|T[e,e_n]|=n+1$ and $T[e,e_n]\cap T[e',\cdots,e'_n]$ is a vertex $c$ of $T$. This implies that $c$ is the common endpoint of $e_n$ and $e'_n$. Now the proof is completed.
\end{proof}

\subsection{Location of marked edges}
Let $(f,A)$ be a non-trivial CTP polynomial with regular set $E\neq\emptyset$ and $\#V_f=3$. Let $T$ is a branched tree of $f$ with $E\subset T$. An edge $e$ of $T$ is {\bf marked} if $e\cap E\neq\emptyset$.

\begin{lemma}\label{common}
Suppose that there is a vertex $c\in T$ such that $c$ is the common endpoint of two distinct marked edges of $T$. Then $c$ is the common endpoint of all the marked edges of $T$. Moreover, $T\smm\{c\}$ is disjoint from $(A\smm E)$.
\end{lemma}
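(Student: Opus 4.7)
The plan is to derive a contradiction from Theorem~A whenever either conclusion of the lemma fails, using Corollary~\ref{top2} to produce a Jordan domain whose preimage component must contain two points that cannot actually lie together. Since $T = f^{-1}(I)$ for an arc $I$ from $v_0$ to $v_1$ through $b$, the vertex $c$ lies in $f^{-1}(v_i)$ for some $i \in \{0,1\}$, every edge of $T$ has one endpoint in each of $f^{-1}(v_0)$ and $f^{-1}(v_1)$, and each edge contains exactly one preimage of $b$; a marked edge is then one whose $b$-preimage lies in $E$. I would denote by $\alpha_1 \in e_1 \cap E$ and $\alpha_2 \in e_2 \cap E$ the $b$-preimages on the two given marked edges, and for each marked edge $e_j$ at $c$ write $w_j \in f^{-1}(v_{1-i})$ for its other endpoint; since $T$ is a tree, the $w_j$ are pairwise distinct.

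The key structural fact that I would establish first is this: for any Jordan disk $\Omega \subset \cbar$ with $\Omega \cap V_f = \{v_k\}$ (one of $v_0, v_1$), each component of $f^{-1}(\Omega)$ is a topological disk containing exactly one preimage of $v_k$, and if $U$ is the component containing $c^* \in f^{-1}(v_k)$ then $U \cap f^{-1}(b)$ equals precisely the set of $b$-preimages lying on the edges of $T$ adjacent to $c^*$. This follows because the annulus $\Omega \setminus \{v_k\}$ has $\pi_1 = \ZZ$, so the components of $f^{-1}(\Omega \setminus \{v_k\})$ correspond to the orbits of the monodromy $\rho_k$ around $v_k$ acting on $f^{-1}(b)$, and the local model $z \mapsto z^{d^*}$ near $c^*$ shows that this orbit is exactly the cyclic permutation of the $b$-preimages on the $d^* = \deg_{c^*} f$ edges at $c^*$.

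With this fact, suppose first that some marked edge $e_3$ is not incident to $c$, and let $c_3 \in f^{-1}(v_i)$ be its $v_i$-endpoint, so $c_3 \neq c$. Applying Corollary~\ref{top2} to $a_0 = \alpha_1$ and $a_1 = c_3$ (which satisfies $f(a_0) = b \notin V_f$ and $f(a_0) \neq f(a_1)$) yields $\Omega$ with $\Omega \cap V_f \subseteq \{v_i\}$ and a component $U$ of $f^{-1}(\Omega)$ containing both $\alpha_1$ and $c_3$; by the structural fact $U \cap f^{-1}(b)$ consists of the $b$-preimages on edges at $c_3$, contradicting $\alpha_1 \in e_1$ and $c \neq c_3$ together with the fact that each $b$-preimage lies on a unique edge of $T$. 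For the second conclusion, suppose $c' \in (A \setminus E) \cap T$ with $c' \neq c$; then $f(c') \in \{v_i, v_{1-i}\}$. If $f(c') = v_i$, applying Corollary~\ref{top2} to $\alpha_1$ and $c'$ yields the same contradiction. If $f(c') = v_{1-i}$, then since $m_c \geq 2$ and the $w_j$ are distinct, at least one $e_j$ satisfies $w_j \neq c'$; applying Corollary~\ref{top2} to $\alpha_j$ and $c'$ would force $\alpha_j$ to lie on an edge at $c'$, hence $e_j$ to have $c'$ as its $v_{1-i}$-endpoint, contradicting $c' \neq w_j$. The main obstacle is establishing the structural fact cleanly --- specifically, the identification of the cycles of $\rho_k$ on $f^{-1}(b)$ with the partition of $f^{-1}(b)$ by the star at each preimage of $v_k$; once this is in hand, the rest is a short reduction to Theorem~A via Corollary~\ref{top2}, relying only on $T$ being a tree without multi-edges.
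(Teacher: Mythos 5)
There is a genuine gap, and it sits exactly where you flagged "the main obstacle": your structural fact is false in the generality in which you use it. For a Jordan disk $\Omega$ with $b,v_k\in\Omega$ and $\Omega\cap V_f=\{v_k\}$, the $b$-fiber of the component $U$ of $f^{-1}(\Omega)$ containing $c^*\in f^{-1}(v_k)$ is not the star of $T$ at $c^*$ in general; it is $\tau(\{b\text{-preimages on edges at some vertex}\})$ for a monodromy element $\tau$ determined by the homotopy class (rel $V_f$) of an arc in $\Omega$ from $b$ to $v_k$. Your identification with the star of $T$ is only valid when that arc is homotopic to the sub-arc of $I$, i.e.\ for the "standard" disk. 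But the domain $\Omega$ produced by Corollary \ref{top2} for the pair $(\alpha_1,c_3)$ is necessarily \emph{not} in the standard class: the lift of the sub-arc of $I$ starting at $\alpha_1$ ends at $c$, not at $c_3$, so Corollary \ref{top2} can only connect $\alpha_1$ to $c_3$ by choosing an arc in a different homotopy class --- precisely the situation in which your structural fact fails. Hence "$U\cap f^{-1}(b)$ consists of the $b$-preimages on edges at $c_3$" is unjustified, and the claimed contradiction evaporates: $U$ containing both $\alpha_1$ and $c_3$ is exactly what Corollary \ref{top2} guarantees, so the argument is circular. The same objection applies verbatim to both sub-cases of your treatment of $A\smm E$. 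A telling symptom is that your contradictions never invoke Theorem A (the CTP hypothesis) --- they are purported to be purely combinatorial facts about the tree, but the lemma is certainly not true for an arbitrary marked Belyi polynomial, so no such argument can close.

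What the correct use of Theorem A with the standard disk actually yields is only the dichotomy the paper starts from: the component $U_0$ over the standard disk at $c$ contains $\alpha_1,\alpha_2$, so $\partial U_0$ being non-essential forces either $A\subset T(e_1,\dots,e_n)$ (the desired conclusion) or $A\smm T(e_1,\dots,e_n)=\{a\}$ for a single exceptional point $a$. The entire content of the lemma is ruling out that last exceptional point, and this is where the paper needs real work: it uses Corollary \ref{mon} to replace $E$ by $\tau(E)$ for suitable $\tau\in\mo(f)$, and the edge-chasing machinery (the norm on $\mo(f)$ and Lemma \ref{chase}) to transport the configuration --- moving $e_1$ toward $a$ in Case 1, or making $\tau(e_1)$ and $\tau(e')$ adjacent while keeping the other marked edges away in Case 2 --- until some preimage component separates exactly two points of the transported marked set, at which point Theorem A gives the contradiction. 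None of this appears in your proposal, so the essential step is missing.
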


\begin{proof}
Let $\{e_1,\cdots,e_n\}$ be all the marked edges of $T$ such that $c$ is the common endpoint of $e_j$ for $1\le j\le n$. By Lemma \ref{mon} and Theorem A, either $A\subset T(e_1,\cdots,e_n)$ or $A\smm T(e_1,\cdots,e_n)$ is a single point $a$. In the former case, the conclusion of the lemma holds. Now we assume the latter case occurs.

{\it Case 1}. $a\in A\smm E$ and hence $a$ is a vertex of $T$.

Let $\tilde e_1\subset T[c,a]$ be the edge with an endpoint $c$. Then there exists $\tau_0\in\mo(f)$ such that $|\tau_0|=1$ and $\tau_0(e_1)=\tilde e_1$. Denote $\tilde e_j=\tau_0(e_j)$ for $1<j\le n$. Then there exists $\tau_1\in\mo(f)$ such that $|\tau_1|=|T[a,\tilde e_1]|-1$ and $a$ is an endpoint of $\tau_1(\tilde e_1)$. Since $|\tau_1|<|T[a,\tilde e_j]|-1$, $a$ is not the endpoint of $\tau_1(\tilde e_j)$ for $2\le j\le n$. Noticing that $\tau_1(\tilde e_1)\cup\{a\}$ contains exactly two points of $\tau_1(\tau_0(E))\cup(A\smm E)$. This is a contradiction by Theorem A.

{\it Case 2}. $a\in E$.

Let $e'$ be the marked edge of $T$ with $a\in e'$. Then $c$ is not an endpoint of $e'$. Let $\tilde e_1\subset T[c,e']$ be the edge with an endpoint $c$. Then there exists $\tau_0\in\mo(f)$ such that $|\tau_0|=1$ and $\tau_0(e_1)=\tilde e_1$. Denote $\tilde e'=\tau_0(e')$ and $\tilde e_j=\tau_0(e_j)$ for $2\le j\le n$ (see Figure 6). Then
$$
|T[\tilde e_1,\tilde e']|<|T[\tilde e_j,\tilde e']|.
$$

\begin{figure}[htbp]
\centering
\includegraphics[width=12cm]{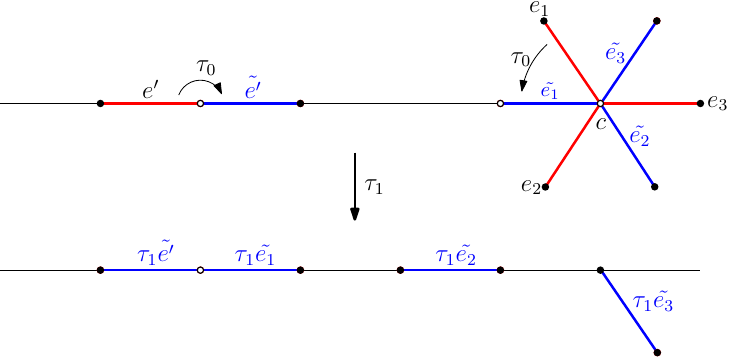}
\caption{Chase of edges}
\end{figure}

Applying Lemma \ref{chase} for $\tilde e_1$ and $\tilde e'$, there exists $\tau_1\in\mo(f)$ such that
\begin{itemize}
\item $|T[\tilde e_1,\tau_1(\tilde e_1)]|=|\tau_1|+1>1$,
\item $\tau_1(\tilde e_1)$ and $\tau_1(\tilde e')$ have a common endpoint $c'$.
\item $T[\tilde e_1,\tau_1(\tilde e_1)]\cap T[\tilde e',\tau_1(\tilde e')]=\{c'\}$.
\end{itemize}
Thus $c'\neq c$ and $\tau_1(\tilde e_j)$ is not adjacent to $\tau_1(\tilde e'))$ for $2\le j\le n$. Since $A\smm T(e_1,\cdots,e_n)=\{a\}$, we have $c'\notin A$. Thus $T(\tau(\tilde e_1),\tau(\tilde e'))$ contains exactly two points of $\tau_1(\tau_0(E))\cup(A\smm E)$. This is a contradiction by Theorem A.
\end{proof}

\begin{lemma}\label{moving}
There is a unique vertex $c_0\in T$ such that $T\smm\{c_0\}$ is disjoint from $A\smm E$ and for any $\tau\in\mo(f)$ and any two marked edges $e,e'$ of $T$,
$$
T[c_0,\tau(e)]\cap T[c_0,\tau(e')]=\{c_0\}\text{ and }|T[c_0,\tau(e)]|=|T[c_0,\tau(e')]|.
$$
\end{lemma}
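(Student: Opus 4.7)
The plan combines Lemma \ref{chase}---which produces a monodromy element bringing two edges into adjacency---with Lemma \ref{common}---which rigidifies the marked-edge configuration once two share an endpoint---using Corollary \ref{mon} to transfer CTP-ness to monodromy translates of $E$.

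To construct a candidate $c_0$, pick any two distinct marked edges $e_0, e_0'$ of $T$ and apply Lemma \ref{chase} to obtain $\sigma \in \mo(f)$ such that $\sigma(e_0), \sigma(e_0')$ share a common endpoint $c$, with $|T[e_0, \sigma(e_0)]| = |\sigma|+1$ and $T[e_0, \sigma(e_0)] \cap T[e_0', \sigma(e_0')] = \{c\}$. By Corollary \ref{mon}, $(f, \sigma(E) \cup (A \smm E))$ is a CTP map, so $\sigma(e_0), \sigma(e_0')$ are two marked edges of $\sigma(E)$ sharing an endpoint, and Lemma \ref{common} yields that every marked edge of $\sigma(E)$ shares $c$ as a common endpoint, while $T \smm \{c\}$ is disjoint from $A \smm E$. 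Set $c_0 := c$.

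For the structural condition with arbitrary $\tau \in \mo(f)$ and marked edges $e, e'$, I rerun the construction on $\tau(e), \tau(e')$: apply Lemma \ref{chase} to find $\sigma'$ such that $\sigma'\tau(e), \sigma'\tau(e')$ share an endpoint $c^*$, then invoke Corollary \ref{mon} and Lemma \ref{common} on $(f, \sigma'\tau(E) \cup (A \smm E))$ to conclude every marked edge of $\sigma'\tau(E)$ shares $c^*$. Lemma \ref{chase}'s intersection conclusion translates into $T[c^*, \tau(e)] \cap T[c^*, \tau(e')] = \{c^*\}$, and its length conclusion---applied symmetrically, since each step of Lemma \ref{chase}'s inductive construction also sends $e_{j-1}'$ to an edge adjacent to it---gives $|T[c^*, \tau(e)]| = |T[c^*, \tau(e')]| = |\sigma'|+1$. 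The proof is complete once $c^* = c_0$.

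The main obstacle is the identification $c^* = c_0$, which simultaneously yields the uniqueness of $c_0$. When $A \smm E$ contains a finite point $a$, both $c_0, c^* \in T$ satisfy the disjointness condition from $A \smm E$, so both equal $a$ and the claim is immediate. The delicate case is $A \smm E \subseteq \{\infty\}$, where condition (i) imposes no constraint: here one argues by contradiction. If $c^* \neq c_0$, a tree-geometric analysis of the paths from $c_0$ to the edges $\sigma'\tau(e), \sigma'\tau(e')$ (which share $c^*$ as common endpoint) shows that either both paths must traverse $c^*$---contributing the segment $[c_0, c^*]$ to their intersection---or $c_0$ lies beyond a far endpoint of one of these edges---contributing the entire edge itself; either way an excess appears, incompatible with the intersection condition derived for $c_0$ by iterating the chase. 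The same argument applied to any other candidate vertex satisfying the lemma's conditions pins it to $c_0$, completing the uniqueness.
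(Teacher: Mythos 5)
Your construction of $c_0$ (chase a pair of marked edges together via Lemma \ref{chase}, then invoke Corollary \ref{mon} and Lemma \ref{common}) matches the paper's opening move, and the intersection condition at the pair-dependent vertex $c^*$ does follow from conclusion (3) of Lemma \ref{chase}, since $T[c^*,\tau(e)]\subset T[\tau(e),\sigma'\tau(e)]$ and likewise for $e'$. But the two statements carrying the real content of the lemma are not established. First, the equal-length condition: Lemma \ref{chase} controls only $|T[e,\sigma'(e)]|$ for the \emph{first} edge of the pair; it says nothing about how far the second edge is dragged, and ``each step sends $e'_{j-1}$ to an adjacent edge'' is both unproved (a generator may fix $e'_{j-1}$) and insufficient (adjacency does not make the two arms from $c^*$ equally long). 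Nothing in the chase prevents $\tau(e')$ from already having $c^*$ as an endpoint while $\tau(e)$ sits $|\sigma'|$ edges away. Ruling this out is exactly where the paper uses the CTP hypothesis again through Theorem A: if the arms had unequal lengths, one brings the shorter edge onto the center by a word of length $|T[c_0,\tau(e_0)]|-1$, the longer edge cannot reach the center, and the resulting configuration is surrounded by an essential curve. Your proposal never invokes Theorem A at this stage, so the equality $|T[c^*,\tau(e)]|=|T[c^*,\tau(e')]|$ is simply asserted.

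Second, the identification $c^*=c_0$ in the case $A\smm E\subset\{\infty\}$ is circular: your contradiction argument appeals to ``the intersection condition derived for $c_0$ by iterating the chase,'' but for a general $\tau$ the chase only produces that condition at the $\tau$-dependent (indeed pair-dependent) vertex $c^*$; the condition at $c_0$ for arbitrary $\tau$ is precisely the statement being proved. The paper closes this loop differently: it first shows (in the case $\#E\ge 3$, via the claim that any three image edges span a subtree with a branched vertex, followed by a minimization over branch points of $T_\tau$) that for each $\tau$ there is a single well-defined center $c(\tau)$ serving \emph{all} marked edges simultaneously with equal arm lengths, and then observes that $c(\tau'\cdot\tau)=c(\tau)$ for every generator $\tau'$, which forces $c(\tau)\equiv c_0$. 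Without some version of this invariance-under-generators step --- and without addressing three or more marked edges, where your $\sigma'$ and $c^*$ a priori change with the chosen pair --- the $\tau$-independence and uniqueness of the center remain open.
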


\begin{proof}
By Lemma \ref{chase} and \ref{common}, there exist a vertex $c_0$ of $T$ and $\tau_0\in\mo(f)$ such that $T\smm\{c_0\}$ is disjoint from $A\smm E$ and $c_0$ is the common endpoint of $\tau_0(e)$ for all the marked edge $e$ of $T$.

{\it Case 1. $\#E=2$}.

Then $c_0\in A$. Let $e_0,e_1$ be the two marked edges of $T$. For any $\tau\in\mo(f,b)$, assume $|T[c_0,\tau(e_0)]|\le |T[c_0,\tau(e_1)]|$. Then there exists $\tau_1\in\mo(f)$ such that $|\tau_1|=|T[c_0,\tau(e_0)]|-1$ and $c_0$ is an endpoint of $\tau_1(\tau(e_0))$.

If $c_0$ is not an endpoint of $\tau_1(\tau(e_1))$, then $T(c_0,\tau_1(\tau(e_0)))$ contains exactly two points of $A$. This is a contradiction by Theorem A. Thus $|T[c_0,\tau(e_0)]|=|T[c_0,\tau(e_1)]|$ and $c_0$ is the common endpoint of $\tau_1(\tau(e_0))$ and $\tau_1(\tau(e_1))$.

If $T[c_0,\tau(e_0)]$ and $T[c_0,\tau(e_1)]$ contains a vertex $c'\neq c_0$ of $T$, then $|T[c',\tau(e_0)]|=|T[c',\tau(e_1)]|$. There exists $\tau_2,\tau'\in\mo(f)$ such that $|\tau_1|=|\tau_2|+|\tau'|$, $|\tau_2|=|T[c',\tau(e_0)]|-1$ and $c'$ is an endpoint of $\tau_2(\tau(e_0))$. This implies that $c'$ is the common endpoint of $\tau_2(\tau(e_0))$ and $\tau_2(\tau(e_1))$. This is a contradiction by Theorem A since $c'\notin A$.

{\it Case 2. $\#E\ge 3$}.

At first, we claim that for any $\tau\in\mo(f)$ and any three distinct marked edges $e_i$ ($i=0,1,2$) of $T$, $T[\tau(e_0),\tau(e_1),\tau(e_2)]$ has a branched vertex. Otherwise, assume $\tau(e_0)\subset T[\tau(e_1),\tau(e_2)]$. By Lemma \ref{chase}, there exists $\tau_1\in\mo(f)$ such that
\begin{itemize}
\item $|T[\tau(e_0),\tau_1(\tau(e_0))]|=|\tau_1|+1$,
\item $\tau_1(\tau(e_0))$ and $\tau_1(\tau(e_1))$ have a common endpoint $c$,
\item $T[\tau(e_0),\tau_1(\tau(e_0))]\cap T[\tau(e_1),\tau_1(\tau(e_1))]=\{c\}$.
\end{itemize}
Thus $\tau_1(\tau(e_2))$ is not adjacent to $\tau_1(\tau(e_1))$. This is a contradiction by Lemma \ref{common}.

Now we claim that for any $\tau\in\mo(f)$, there is a vertex $c(\tau)$ of $T$ such that for any two distinct marked edges $e,e'$ of $T$,
$$
T[c(\tau),\tau(e)]\cap T[c(\tau),\tau(e')]=\{c(\tau)\}\text{ and }|T[c(\tau),\tau(e)]|=|T[c(\tau),\tau(e')]|.
$$

Let $T_{\tau}\subset T$ be the minimal subtree containing all the $\tau$-images of marked edges of $T$. Then $T_{\tau}$ has branched vertex. Let $c'$ be a branched vertex of $T_{\tau}$ and $e_0$ be a marked edge of $T$ such that
$$
|T[c',\tau(e_0)]|\le |T[c,\tau(e)]|
$$
for all the branched points $c$ of $T_{\tau}$ and marked edges $e$ of $T$. Then $T(c',\tau(e_0))$ contains no branched point of $T_{\tau}$. For any marked edge $e$ of $T$, $\tau(e)$ is not contained in $T[c',\tau(e_0)]$ by the first claim. Thus for any marked edge $e$ of $T$,
$$
T[c',\tau(e_0)]\cap T[c',\tau(e)]=\{c'\}\text{ and }|T[c',\tau(e_0)]|\le|T[c',\tau(e)]|
$$
Assume that there is a marked edge $e_1$ such that $|T[c',\tau(e_1)]|>|T[c',\tau(e_0)]|$. Then there exists $\tau_1\in\mo(f)$ such that $|\tau_1|=|T[c',\tau(e_0)]|-1$ and $c'$ is an endpoint of $\tau_1(\tau(e_0))$. Thus for any marked edge $e$ of $T$, $\tau(e)$ and $\tau_1(\tau(e))$ are contained in the same component of $T\smm\{c'\}$. In particular, $c'$ is not an endpoint of $\tau_1(\tau(e_1))$.

\begin{figure}[htbp]
\centering
\includegraphics[width=12cm]{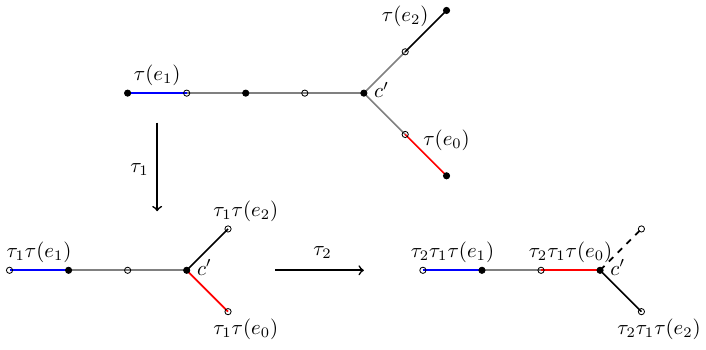}
\caption{Center of marked edges}
\end{figure}

There exists $\tau_2\in\mo(f)$ such that $|\tau_2|=1$ and $\tau_2(\tau_1(\tau(e_1)))$ and $\tau_2(\tau_1(\tau(e_0)))$ are contained in the same component of $T\smm\{c'\}$ (see Figure 7). This is a contradiction by the first claim. Thus we have
$$
|T[c',\tau(e)]|=|T[c',\tau(e_0)]|
$$
for any marked edge $e$ of $T$. Now the second claim is proved.

For any $\tau,\tau'\in\mo(f)$, $c(\tau'\cdot\tau)=c(\tau)$ if $|\tau'|=1$. Thus $c(\tau)=c_0$.
\end{proof}

\begin{lemma}\label{stab}
Let $(f,A)$ be a non-trivial CTP polynomial with regular set $E\neq\emptyset$ and $\#V_f=3$. Then $\st(a)=\st^*(E)$ for any point $a\in E$.
\end{lemma}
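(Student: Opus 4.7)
The plan is to argue by contradiction, assuming $\tau\in\st(a)\smm\st^*(E)$ and producing a violation of Theorem~A. By Lemma~\ref{marked1}, $f(E)$ is a single point $b$. Choosing the arc $I=[v_0,v_1]$ through $b$ (with $V_f=\{v_0,v_1,\infty\}$), I form the branched tree $T=f^{-1}(I)$, so that $E\subset f^{-1}(b)\cap T$. Lemma~\ref{moving} supplies the unique center $c_0\in T$ such that $T\smm\{c_0\}$ is disjoint from $A\smm E$ and, for every $\tau'\in\mo(f)$, the set $\{\tau'(e_{a''}):a''\in E\}$ forms a star of arms of common length $k$ around $c_0$.

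By Corollary~\ref{mon}, $(f,A_\tau):=(f,\tau(E)\cup(A\smm E))$ is also CTP. The defining conditions of $c_0$ in Lemma~\ref{moving} involve only $A\smm E$ together with a condition quantified over all of $\mo(f)$, both of which are preserved when $E$ is replaced by $\tau(E)$; uniqueness of $c_0$ therefore gives the same center for $(f,A_\tau)$. Since $\tau(a)=a$, the arm $\pi_a=T[c_0,e_a]$ belongs to both stars. Pick $a'\in E$ with $\tau(a')\neq a'$; then $e_{a'}$ and $\tau(e_{a'})$ are distinct arms of length $k$ at $c_0$, both disjoint from $\pi_a$ away from $c_0$. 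I apply Lemma~\ref{chase} to the pair $(e_{a'},\tau(e_{a'}))$, producing $\sigma\in\mo(f)$ such that $\sigma(e_{a'})$ and $\sigma(\tau(e_{a'}))$ share a common endpoint $c^*$. Applying Corollary~\ref{mon} with $\sigma$ yields CTP maps $(f,\sigma(A))$ and $(f,\sigma(A_\tau))$, both with center $c_0$ and an analogous star structure, whose respective marked edges $\sigma(e_{a'})$ and $\sigma\tau(e_{a'})$ meet at $c^*$. Using Corollary~\ref{top2} I construct a small Jordan domain $\Omega$ about $c^*$ whose preimage has a component $U$ containing both $\sigma(a')$ and $\sigma\tau(a')$. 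Applying Theorem~A simultaneously to $(f,\sigma(A))$ and $(f,\sigma(A_\tau))$ forces $\partial U$ to be non-essential in both marked spheres, which---combined with the placement of $A\smm E$ outside $T\smm\{c_0\}$ and the disjointness of $\pi_a$ from the conflicting arms at $c^*$---produces an incompatible arrangement, a contradiction analogous to the one at the end of the proof of Lemma~\ref{common}.

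The main obstacle is this final contradiction step, where the Jordan domain $\Omega$ must be chosen so that the two non-essentiality conditions combined with the fixed location of $A\smm E$ admit no consistent placement of the arm $\pi_a$. The Belyi polynomial hypothesis $\#V_f=3$ enters crucially: $\mo(f)$ is generated by only two elements $\rho_0,\rho_1$, which gives the sharp word-length control in Lemma~\ref{chase}, and the total ramification of $f$ at $\infty$ (a polynomial feature) ensures $\cbar\smm T$ is a single simply-connected disk, which the chasing argument exploits. A secondary difficulty arises in the edge cases $\#(A\smm E)\in\{0,1\}$ where Lemma~\ref{cup} cannot be directly applied to $(f,A\cup A_\tau)$; in those cases the combinatorial chase may need to be supplemented by a cross-ratio rigidity argument along the lines of Lemma~\ref{stab3}(b), using that the conformal realization of $\tau$ on $A$ must fix $\infty$ and hence be affine.
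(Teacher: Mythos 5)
Your setup---the branched tree $T=f^{-1}(I)$, the center $c_0$ from Lemma \ref{moving}, the equal-length star of marked arms, and Corollary \ref{mon} to move $E$ around by the monodromy---matches the paper's framework. But the endgame, which is where the proof actually lives, has a genuine gap. After Lemma \ref{chase} produces $\sigma$ making $\sigma(e_{a'})$ and $\sigma\tau(e_{a'})$ adjacent at $c^*$, Theorem A gives you nothing: the two points $\sigma(a')$ and $\sigma\tau(a')$ lie in the two \emph{different} marked sets $\sigma(A)$ and $\sigma(A_\tau)$, and Theorem A constrains each CTP map separately. A small region around $c^*$ containing one point of $\sigma(A)$ and one point of $\sigma(A_\tau)$ is non-essential for both marked spheres, so there is no ``incompatible arrangement.'' (A secondary problem: Corollary \ref{top2} cannot even be invoked for this pair, since $f(\sigma(a'))=f(\sigma\tau(a'))=b$ violates its hypothesis $f(a_0)\neq f(a_1)$.) Your proposed repair---merge the two marked sets via Lemma \ref{cup}---requires $\#(\sigma(A)\cap\sigma(A_\tau))\ge 3$, which is only guaranteed when $\#(A\smm E)=2$; but in that case the lemma already follows from Lemma \ref{marked2} and Lemma \ref{stab1} without any of this. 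The genuinely new case $\#(A\smm E)\le 1$ (equivalently $\#E\ge 3$) is exactly where your argument is empty, and the fallback ``cross-ratio rigidity along the lines of Lemma \ref{stab3}(b)'' is not an argument: those computations depend on $\#A=4$ and an explicit degenerating family, and no mechanism is given for the general case.

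For comparison, the paper does not argue by contradiction at all. It writes $\tau\in\st(a)$ as a word $\tau_n\cdots\tau_1$ with $|\tau_j|=1$, tracks the two edge orbits $e_j=\tau_j(e_{j-1})$ and $e'_j=\tau_j(e'_{j-1})$, and uses Lemma \ref{moving} (equal arm lengths) to show both edges are adjacent to $c_0$ at exactly the same times $j_k$; between such times each edge wanders inside a single component of $T\smm\{c_0\}$ and returns unchanged, and at consecutive times the move is a power of $\rho_0$ (where $f(c_0)=v_0$). The cumulative effect is $e\mapsto\rho_0^{p}(e)$ and $e'\mapsto\rho_0^{p}(e')$ for the same $p$; since $\tau$ fixes $e$, $p$ is a multiple of $\deg_{c_0}f$, so $\rho_0^{p}$ fixes every edge at $c_0$, in particular $e'$. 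This local rotational rigidity at $c_0$ is the ingredient your proposal is missing, and it is not recoverable from Theorem A applied to the two marked sets separately.
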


\begin{proof}
Let $T$ be a branched tree of $f$ with $E\subset T$. Let $e$ be the edge of $T$ with $a\in e$. For any point $a'\in E$, let $e'$ be the edge of $T$ with $a'\in e'$. By Lemma \ref{moving}, there is a unique vertex $c_0\in T$ such that $T\smm\{c_0\}$ is disjoint from $A\smm E$ and for any $\tau\in\mo(f)$,
$$
T[c_0,\tau(e)]\cap T[c_0,\tau(e')]=\{c_0\}\text{ and }|T[c_0,\tau(e)]|=|T[c_0,\tau(e')]|.
$$
Without loss of generality, we may assume that $f(c_0)=v_0$ and $c_0$ is the common endpoint of $e$ and $e'$.

For any $\tau\in\st(a)$, there exists a sequence $\{\tau_1,\cdots,\tau_n\}$ in $\mo(f)$ such that $|\tau_j|=1$ for $1\le j\le n$ and $\tau=\tau_n\cdot\cdots\cdot\tau_1$. Denote $e_0=e$ and $e_j=\tau_{j}(e_{j-1})$. Then either $e_{j}=e_{j-1}$ or $e_{j}$ is adjacent to $e_{j-1}$.
Let $\{e_{j_k}\}$ ($0\le k\le m$) be the subsequence of $\{e_j\}_{j=0}^n$ such that $c_0$ is an endpoint of $e_{j_k}$. Then $j_0=0$.

Denote $e'_0=e'$ and $e'_j=\tau_j(e'_{j-1})$ for $1\le j\le n$. By Lemma \ref{moving}, $\{e'_{j_k}\}$ ($0\le k\le m$) is also the subsequence of $\{e'_j\}_{j=0}^n$ such that $c_0$ is an endpoint of $e'_{j_k}$.

If $j_{k+1}>j_{k}+1$, then $\{e_j\}$ are contained in the same component of $T\smm\{c_0\}$ for $j_{k}\le j\le j_{k+1}$. Thus $e_{j_{k+1}}=e_{j_k}$. Similarly, $e'_{j_{k+1}}=e'_{j_k}$.

If $j_{k+1}=j_{k}+1$, then $\tau_{j_k}=\rho_0^{p_k}$ for some integer $p_k\in\ZZ$. Thus there is an integer $p\in\ZZ$ such that
$$
e_{j_m}=\rho_0^{p}(e)\text{ and }e'_{j_m}=\rho_0^{p}(e').
$$
Since $\tau\in\st(a)$, we have $e_{j_m}=e$. Thus $\rho_0^{p}(e)=e$ and hence $e'_{n}=\rho_0^{p}(e')=e'$. So $\tau(a')=a'$. Now the lemma is proved.
\end{proof}

\subsection{Power factor}
\begin{lemma}\label{symmetry}
Let $(f,A)$ be a non-trivial CTP polynomial with regular set $E\neq\emptyset$. Suppose that $\st(a)=\st^*(E)$ for any point $a\in E$. Then $(f,A)$ satisfies McMullen's condition.
\end{lemma}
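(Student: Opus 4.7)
Here is my plan for Lemma \ref{symmetry}. The strategy is to produce a finite subgroup $G \subset \mathrm{PSL}_2(\CC)$ consisting of deck transformations of $f$ (that is, $f \circ \lambda = f$ for every $\lambda \in G$) which acts transitively on $E$ and fixes each point of $A \setminus E$. Because $f$ is a polynomial, every $\lambda \in \mathrm{Aut}(f)$ must fix $\infty$ (the unique preimage of $\infty$), so such a $G$ is automatically a finite group of affine maps, hence cyclic. After a Möbius change of coordinates, $G = \langle z \mapsto \zeta z\rangle$ for a primitive root of unity $\zeta$ of order $d = |G|$, and the quotient $s(z) = z^{d}$ is a Belyi map with $V_s = \{0,\infty\}$. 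The $G$-invariance yields a rational $g$ with $f = g \circ s$. Since $s(E) = \{p\}$ for some $p \notin V_s$ and $s(A \setminus E) \subset V_s \cup \{p\}$, a short case check on $\#(A \setminus E) \in \{0,1,2\}$ gives $\#(s(A) \cup V_s) = 3$.

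Let $b = f(E)$, a single point by Lemma \ref{marked1}. For each loop $\gamma \subset \cbar \setminus V_f$ based at $b$, lift the point-pushing isotopy $\phi_t$ of $\cbar$ rel $V_f$ with $\phi_t(b)=\gamma(t)$ to the isotopy $\psi_t$ of $\cbar$ rel $f^{-1}(V_f)$ with $\psi_0 = \mathrm{id}$, as in Lemma \ref{move}. The endpoint $\psi_1$ fixes $f^{-1}(V_f)$ pointwise, acts on $f^{-1}(b)$ by the monodromy $\tau_\gamma$, and satisfies $f \circ \psi_1 = \phi_1 \circ f$. Since $(f,A)$ is a CTP map, the class of the lift equals $[\mathrm{id}] \in \tei(\cbar, A)$, so $\psi_1$ is isotopic rel $A$ to a Möbius transformation $\lambda_\gamma$. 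As $\#A \ge 3$, this $\lambda_\gamma$ is uniquely determined by its action on $A$: it equals $\tau_\gamma$ on $E$ and the identity on $A \setminus E$. The assignment $\gamma \mapsto \lambda_\gamma$ factors through a homomorphism $\mo(f)/\st^*(E) \to \mathrm{PSL}_2(\CC)$ (with $\lambda_{\gamma_1 \gamma_2} = \lambda_{\gamma_1}\lambda_{\gamma_2}$ verified on $A$), and restricting to loops with $\tau_\gamma \in \st(E)$ yields a finite subgroup $G_0 \subset \mathrm{PSL}_2(\CC)$ preserving $E$ setwise and fixing $A \setminus E$ pointwise.

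For transitivity of $G_0$ on $E$, I assume $(f,A)$ is maximal CTP, enlarged via Lemma \ref{cup}; this enlargement preserves the stabilizer hypothesis. Given $a, a' \in E$ and $\tau \in \mo(f)$ with $\tau(a) = a'$, Corollary \ref{mon} and Lemma \ref{cup} imply $(f, A \cup \tau(E))$ is CTP once $\#(E \cap \tau(E)) + \#(A \setminus E) \ge 3$, and then maximality forces $\tau(E) = E$. The case $\#(A \setminus E) \ge 2$ is immediate. The cases $\#(A \setminus E) \in \{0,1\}$ use that the hypothesis $\st(a) = \st^*(E)$ forces any $\tau \in \st(E) \setminus \st^*(E)$ to act freely on $E$, combined with a topological argument analogous to Lemma \ref{marked2}: Corollary \ref{top2} produces a Jordan domain whose preimage joins $a$ to $a'$, and the non-essentiality forced by Theorem A inflates the intersection $E \cap \tau(E)$ to the required size.

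The main obstacle is promoting each Möbius candidate $\lambda_\gamma$ to an actual element of $\mathrm{Aut}(f)$: by construction one only knows $(f \circ \lambda_\gamma)(a) = f(a)$ on the finite set $A$, not globally. I intend to bridge this gap by applying the CTP condition iteratively to the nested marked sets $A \subset A \cup f^{-1}(A) \subset A \cup f^{-1}(A) \cup f^{-2}(A) \subset \cdots$, at each stage producing a Möbius transformation compatible with $\lambda_\gamma$ on the enlarged marked set. Since $\bigcup_{n \ge 0} f^{-n}(A)$ accumulates on an infinite set (e.g. the Julia set of $f$), the identity $f \circ \lambda_\gamma = f$ then follows by analytic rigidity of rational maps agreeing on an infinite set. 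With $G_0 \subset \mathrm{Aut}(f)$ established, the polynomial constraint forces $G_0$ cyclic, $s(z) = z^{d}$ descends as the Belyi factor after normalization, and the count $\#(s(A) \cup V_s) = 3$ is verified case by case.
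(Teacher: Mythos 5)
There is a genuine gap at the step you yourself flag as the main obstacle: promoting the M\"obius candidates $\lambda_\gamma$ (known only on the finite set $A$) to actual deck transformations of $f$. Your proposed bridge --- applying the CTP condition to the nested marked sets $A\subset A\cup f^{-1}(A)\subset\cdots$ --- is unjustified, because the CTP property is a property of the specific marked set $A$ and does not pass to enlargements. By Theorem A, enlarging the marked set only makes the condition harder (more preimage curves become essential), so $(f,A\cup f^{-1}(A))$ will in general fail to be a CTP map, and nothing in your argument supplies the required compatibility on the larger sets. Without this, $f\circ\lambda_\gamma=f$ on an infinite set is never reached and the Belyi factor never materializes.

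The missing idea is that the hypothesis $\st(a)=\st^*(E)$ is precisely what produces \emph{global} deck transformations directly, with no recourse to the CTP condition at this stage; your proposal never uses the hypothesis for this purpose. For $a_i,a_j\in E$ and $z\in\CC\smm f^{-1}(V_f)$, take a path $\alpha_i$ from $a_i$ to $z$ avoiding $f^{-1}(V_f)$, push it down to $\g=f(\alpha_i)$, and lift $\g$ starting at $a_j$; call the endpoint $\lambda_{i,j}(z)$. The value is independent of the choice of $\alpha_i$ exactly because $\st(a_i)=\st(a_j)$ (a discrepancy between two choices would produce a loop whose monodromy fixes $a_i$ but not $a_j$), so $\lambda_{i,j}$ is a single-valued analytic map with $f\circ\lambda_{i,j}=f$ \emph{by construction}, extending over the finitely many punctures and over $\infty$ to an affine map. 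The group $\Lambda$ generated by these is finite of order at most $\deg f$, hence cyclic and conjugate to rotations; it is transitive on $E$ for free since $\lambda_{i,j}(a_i)=a_j$, so your separate maximality/transitivity machinery is unnecessary. One further point: even granting your group $G_0$, the conclusion $\#(s(A)\cup V_s)=3$ is not just a case check on $\#(A\smm E)$; when $\#E\ge 3$ one still needs an argument (the paper varies $b$ and uses that the round circle through $E$ must map to another such circle by a M\"obius map fixing $A\smm E$, forcing $A\smm E\subset\{c,\infty\}$), and when $\#E=2$ one falls back on Lemma \ref{marked2}.
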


\begin{proof}
Denote $b=f(E)$. Let $(a_i,a_j)$ be a pair of points in $E$. For any point $z\in\CC\smm f^{-1}(V_f)$, let $\alpha_i:\,[0,1]\to\CC\smm f^{-1}(V_f)$ be a curve joining $a_i$ and $z$. Then $\g:=f(\alpha_i)\subset\CC\smm V_f$ is a curve joining $b$ and $f(z)$. Thus there is a unique lift $\alpha_j$ of $f^{-1}(\g)$ joining $a_j$ and a point $w$ with $f(w)=f(z)$. Since $\st(a_i)=\st(a_j)$, $w$ is independent of the choice of the curve $\alpha_i$. Define
$$
\lambda_{i,j}(z)=w.
$$
Then $\lambda_{i,j}$ is a conformal map of $\CC\smm f^{-1}(V_f)$ and $f\circ\lambda_{i,j}=f$ on $\cbar$. In particular, $\lambda_{i,j}$ is a conformal map of $\CC$ since $\lambda_{i,j}(z)\to\infty$ as $z\to\infty$.

Let $\Lambda$ be the group generated by $\{\lambda_{i,j}\}$ for all pairs of points $(a_i,a_j)$ in $E$. Then $f\circ\lambda=f$ for all $\lambda\in\Lambda$. Thus $d:=\#\Lambda\le\deg f$. Therefore $\Lambda$ must be a cyclic group generated by a rotation $\lambda_0(z)=\zeta(z-c)+c$ with $c\in\CC$ and $\zeta=e^{\frac{2\pi i}{d}}$. Set $P(z)=(z-c)^d$. Then $P\circ\lambda_0=P$ and $P(E)$ is a single point. Set $g(w)=f(P^{-1}(w))$. Then $g$ is a well-defined rational map and $f=g\circ P$.

If $\#E=2$, then $\#(A\smm E)=2$. By Lemma \ref{marked2}, $(f,A)$ satisfies McMullen's condition. Assume $\#E\ge 3$. Let $U\subset\CC\smm V_f$ be a Jordan domain with $b\in U$. Then $g^{-1}(U)$ has a unique component $W$ such that $P(E)\in W$. Pick a point $w'\in W$ such that $|w'|>|P(E)|$. Since $(f,A)$ is a CTP map, there exists a conformal map $\lambda$ of $\cbar$ such that $\lambda(A\smm E)=A\smm E$ and $\lambda(E)\subset P^{-1}(w')$. In particular, the round circle containing $E$ maps to the round circle containing $P^{-1}(w')$ by $\lambda$. This implies that $A\smm E\subset\{c,\infty\}$. Thus $\#(P(A)\cup V_P)=3$ and hence $f$ satisfies McMullen's condition.
\end{proof}

\noindent
{\it Proof of Theorem \ref{main} in Belyi case}. Combining Lemma \ref{stab} and \ref{symmetry}, we complete the proof. \qed

\section{CTP polynomials}
In this section we will prove Theorem \ref{main} in general case.

\subsection{Pinching of CTP polynomials}
Let $(f,A)$ be a non-trivial CTP polynomial with regular set $E\neq\emptyset$ and $\#V_f\ge 4$. Denote $b=f(E)$. Let $D\subset\CC\smm\{b\}$ be a Jordan domain such that $\partial D$ is disjoint from $V_f\cup\{b\}$ and $\#(V_f\smm D)=2$. By Theorem A, for each component $U$ of $f^{-1}(D)$, $\#(U\cap A)\le 1$ since $\#E\ge 2$ by Lemma \ref{marked1}.

Pick a point $v\in D$. For each component $U$ of $f^{-1}(D)$, Define $\wt g_U:\, U\to D$ to be a branched covering such that the following conditions hold:
\begin{itemize}
\item $\wt g_U$ has no critical point in $U\smm A$.
\item If $U$ contains a point $a\in A$, then $g_U(a)=v$.
\item $\wt g_U$ can be continuously extended to the boundary with $(\wt g_U)|_{\partial U}=f|_{\partial U}$.
\end{itemize}
Define $\wt g=f$ on $\cbar\smm f^{-1}(D)$ and $\wt g=g_U$ on every component $U$ of $f^{-1}(D)$. Then $\wt g$ is a branched covering of $\cbar$. By the Uniformalization Theorem, there is a homeomorphism $\varphi$ of $\CC$ such that $g=\wt g\circ\varphi^{-1}$ is a polynomial. In particular, $g\circ\varphi=f$ on $f^{-1}(\Omega)$.

Denote $\Omega=\cbar\smm\overline{D}$. Then $V_g=(\Omega\cap V_f)\cup\{v\}$. Thus $g$ is a Belyi polynomial. Denote $A'=\varphi(A)$. Then $g(A')\subset V_g\cup\{b\}$.
We call $(g,A')$ a {\bf pinching} of $(f,A)$ on $D$.

\begin{lemma}\label{pinching}
$(g,A')$ is a non-trivial CTP Belyi polynomial.
\end{lemma}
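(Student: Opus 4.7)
The plan is to verify the two assertions separately. First, since $V_g=(V_f\cap\Omega)\cup\{v\}$ with $v\in D$ and $\#(V_f\smm D)=2$, we have $\#V_g=3$, so $g$ is a Belyi polynomial; non-triviality of $(g,A')$ follows from $\#A'=\#A\ge 4$, because $\varphi$ is a homeomorphism and $(f,A)$ is non-trivial.

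For the CTP property, I would apply Theorem A. Note that $g(A')\cup V_g=V_g\cup\{b\}$ since $g(A')\subset V_g\cup\{b\}$. Let $\G\subset\cbar\smm(V_g\cup\{b\})$ be any Jordan curve. The key reduction is an isotopy argument: in the punctured sphere $\cbar\smm(V_g\cup\{b\})$, the curve $\partial D$ is peripheral, because it bounds the disk $D$ containing only the single puncture $v$; hence $\partial D$ has vanishing geometric intersection number with every simple closed curve in this complement. Therefore $\G$ is isotopic in $\cbar\smm(V_g\cup\{b\})$ to a curve $\G'$ disjoint from $\partial D$, so either $\G'\subset\Omega$ or $\G'\subset D$. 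Since the ambient isotopy lifts via $g$ to an isotopy of $g^{-1}(\G)$ to $g^{-1}(\G')$ in $\cbar\smm g^{-1}(V_g\cup\{b\})\subset\cbar\smm A'$, the essential/non-essential property of each component is preserved, and it suffices to handle the two cases.

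If $\G'\subset\Omega$, then $\G'$ is also disjoint from $V_f\cup\{b\}=f(A)\cup V_f$: the set $V_f\cap\Omega$ is contained in $V_g$ and hence avoided, $b$ is avoided, and $V_f\cap D$ lies inside $D$, away from $\G'$. Since $g\circ\varphi=f$ on $f^{-1}(\Omega)$, we have $g^{-1}(\G')=\varphi(f^{-1}(\G'))$. The CTP hypothesis on $(f,A)$ together with Theorem A gives that each component of $f^{-1}(\G')$ is non-essential in $\cbar\smm A$; transporting by the homeomorphism $\varphi$, each component of $g^{-1}(\G')$ is non-essential in $\cbar\smm A'$.

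If $\G'\subset D$, each component of $g^{-1}(\G')$ lies in a single component $\varphi(U)$ of $g^{-1}(D)$, where $U$ is a component of $f^{-1}(D)$. By the remark preceding the construction, $\#(U\cap A)\le 1$, hence $\#(\varphi(U)\cap A')\le 1$; moreover $\varphi(U)$ is a bounded Jordan domain in $\CC$. Because $\G'$ avoids $v$, the only critical value of $g$ inside $D$, the intersection $g^{-1}(\G')\cap\varphi(U)$ is a disjoint union of Jordan curves, each bounding a sub-disk of $\varphi(U)$; each such sub-disk therefore contains at most one point of $A'$, so the boundary curve is non-essential in $\cbar\smm A'$. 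The main subtlety I anticipate is the peripheral-curve isotopy reduction in the second paragraph; the remaining arguments are essentially bookkeeping using the construction of $\varphi$ and the CTP property of $(f,A)$.
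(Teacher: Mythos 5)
Your proof is correct and follows essentially the same route as the paper: reduce via Theorem A, push the test curve off $\partial D$ using that $D$ contains only the single marked point $v$, and transport preimages of curves in $\Omega$ through the semi-conjugacy $g\circ\varphi=f$ to invoke the CTP property of $(f,A)$. The only difference is cosmetic: the paper homotopes every curve entirely into $\Omega$ (so your second case, $\G'\subset D$, is subsumed), while you handle curves inside $D$ directly via the bound $\#(U\cap A)\le 1$; both are fine.
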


\begin{proof}
For any Jordan curve $\G\subset\cbar\smm(V_g\cup\{b\})$, since $D$ contains at most one point of $g(A')\cup V_g$, there is a Jordan curve $\G'\subset\Omega$ homotopic to $\G$ rel $g(A')\cup V_g$. By Theorem A, each component of $f^{-1}(\G')$ is non-essential in $\cbar\smm A$. Thus each component of $g^{-1}(\G')=\varphi(f^{-1}(\G'))$ is non-essential in $\cbar\smm A'$. By Theorem A, $(g,A')$ is a non-trivial CTP polynomial.
\end{proof}

\vskip 0.24cm
For any $\g\in\pi_1(\CC\smm V_g,b)$, $\g$ can be chosen such that $\g\subset\Omega$. So $\g$ also represents an element of $\pi_1(\CC\smm V_f,b)$. Let $\tau_{f,\g}\in\mo(f,b)$ and $\tau_{g,\g}\in\mo(g,b)$ be the permutations induced by $\g$, respectively. Then we obtain a group homomorphism
$$
\varphi^*:\,\mo(g,b)\to\mo(f,b),\quad\varphi^*(\tau_{g,\g})=\tau_{f,\g}.
$$
Denote by $\st(\varphi(a))\subset\mo(g,b)$ the stablizer of $\varphi(a)$ for $a\in f^{-1}(b)$.

\begin{lemma}\label{injection}
The group homomorphism $\varphi^*$ is injective. Moreover, for each $a\in f^{-1}(b)$ and $\tau\in\mo(g,b)$, $\tau\in\st(\varphi(a))$ if and only if $\varphi^*(\tau)\in\st(a)$.
\end{lemma}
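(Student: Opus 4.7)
The plan is to realize $\varphi^*$ as conjugation by the bijection $\varphi|_{f^{-1}(b)}:\,f^{-1}(b)\to g^{-1}(b)$, from which both assertions fall out immediately.

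First I would verify that $\varphi$ does restrict to such a bijection. Since $\wt g=f$ on $\cbar\smm f^{-1}(D)$ and each $\wt g_U$ sends the component $U$ of $f^{-1}(D)$ into $D$, one has $\wt g^{-1}(D)=f^{-1}(D)$; hence $g^{-1}(\Omega)=\varphi(f^{-1}(\Omega))$, and $g\circ\varphi=f$ throughout $f^{-1}(\Omega)$. As $b\in\Omega$, this gives the bijection $f^{-1}(b)\to g^{-1}(b)$.

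The central step is compatibility of lifts. Take $\g\in\pi_1(\CC\smm V_g,b)$ represented by a loop in $\Omega$ (automatically in $\Omega\smm V_f$, since $V_g\cap\Omega=V_f\cap\Omega$), and let $\delta_f$ be the $f$-lift of $\g$ starting at $a\in f^{-1}(b)$. Because $\g(t)\in\Omega$ forces $\delta_f(t)\in f^{-1}(\Omega)$, the identity $g\circ\varphi=f$ applies pointwise along $\delta_f$, giving $g(\varphi(\delta_f(t)))=\g(t)$ with $\varphi(\delta_f(0))=\varphi(a)$. Uniqueness of path lifting identifies $\varphi\circ\delta_f$ as the $g$-lift of $\g$ starting at $\varphi(a)$, and evaluating at $t=1$ yields
$$
\tau_{g,\g}(\varphi(a))=\varphi(\tau_{f,\g}(a)).
$$
Equivalently, $\varphi^*(\tau_{g,\g})=\varphi|_{f^{-1}(b)}^{-1}\circ\tau_{g,\g}\circ\varphi|_{f^{-1}(b)}$, so $\varphi^*$ is literally conjugation by the bijection $\varphi|_{f^{-1}(b)}$.

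Both conclusions now follow mechanically. Conjugation by a bijection is injective on permutation groups, so $\varphi^*$ is injective. Moreover, $\tau\in\st(\varphi(a))$ means $\tau(\varphi(a))=\varphi(a)$, which by the conjugation relation is the same as $\varphi^*(\tau)(a)=a$, i.e., $\varphi^*(\tau)\in\st(a)$. The one subtlety I anticipate is confirming the preceding assertion that every element of $\pi_1(\CC\smm V_g,b)$ admits a representative in $\Omega$; this should follow from the fact that $v$ is the unique element of $V_g$ lying in $\overline{D}$, so any sub-arc of a loop that enters $D$ can be pushed around $v$ onto $\partial D$ without leaving $\CC\smm V_g$.
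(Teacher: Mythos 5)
Your proof is correct and follows essentially the same route as the paper: both rest on the observation that $g\circ\varphi=f$ on $f^{-1}(\Omega)$ forces $\varphi$ to carry $f$-lifts of loops $\g\subset\Omega$ to $g$-lifts, so that $\varphi^*(\tau)=\varphi|_{f^{-1}(b)}^{-1}\circ\tau\circ\varphi|_{f^{-1}(b)}$. Your explicit conjugation formulation (and the remark on pushing loops off $D$, which the paper asserts without proof) is a slightly cleaner packaging of the same argument.
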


\begin{proof}
For any $\tau\in\mo(g,b)$, there exists a curve $\g\in\pi_1(\CC\smm V_g,b)$ with $\g\subset\Omega$ such that $\tau_{g,\g}=\tau$. If $\tau\neq\mathrm{id}$, then $\g$ has a lift $\delta$ under $g$ such that the start point and the endpoint of $\delta$ are different. Since $g\circ\varphi=f$ on $f^{-1}(\Omega)$, $\varphi^{-1}(\delta)$ is a lift of $\g$ under $f$ whose start point and endpoint are different. Thus $\tau_{f,\g}\neq\mathrm{id}$. So $\varphi^*$ is injective.

Note that $\tau\in\st(\varphi(a))$ if and only if $\g$ has a lift $\delta$ under $g$ such that both the start point and the endpoint of $\delta$ is the point $\varphi(a)$, or equivalently, both the start point and the endpoint of $\varphi^{-1}(\delta)$ is the point $a$. Thus $\tau\in\st(\varphi(a))$ if and only if $\varphi^*(\tau)\in\st(a)$.
\end{proof}

\subsection{Alternate pinching}
\begin{lemma}\label{stabm}
Let $(f,A)$ be a non-trivial CTP polynomial with regular set $E\neq\emptyset$ and $\#V_f\ge 4$. Then $\st(a)=\st^*(E)$ for any point $a\in E$.
\end{lemma}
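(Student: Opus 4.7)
The plan is to reduce every $\tau\in\st(a)$ to the Belyi case, Lemma \ref{stab}, via the pinching construction of Lemma \ref{pinching}, by decomposing $\st(a)$ into ``local'' generators each of which is already supported away from a suitable pinching locus. The inclusion $\st^*(E)\subset\st(a)$ is automatic for $a\in E$; the content is the reverse inclusion.

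The starting point is the covering-space identification $\st(a)=f_*(\pi_1(Y,a))$, where $Y=\cbar\smm f^{-1}(V_f)$ is a finitely punctured sphere. Its fundamental group is generated (modulo the product-of-all-loops relation) by pigtails $\delta_c=\epsilon_c\cdot\lambda_c\cdot\epsilon_c^{-1}$, one for each $c\in f^{-1}(V_f)$, in which $\lambda_c$ is a small loop around $c$ and $\epsilon_c$ is a simple arc in $Y$ joining $a$ to the basepoint of $\lambda_c$. Pushing forward, $\st(a)$ is generated inside $\mo(f,b)$ by the pigtail classes $\tau_c:=[f(\epsilon_c)\cdot f(\lambda_c)\cdot f(\epsilon_c)^{-1}]$, each a small loop around $v_c:=f(c)\in V_f$ conjugated by the path $f(\epsilon_c)$ from $b$. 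It therefore suffices to show $\tau_c\in\st^*(E)$ for every such generator.

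Fix such a $\tau_c$, and let $\G_c\subset\CC\smm V_f$ be the underlying lollipop-shaped subset (the arc $f(\epsilon_c)$ together with the small circle $f(\lambda_c)$). After shrinking $\lambda_c$, the set $\CC\smm\G_c$ has exactly two components: a small disk containing only $v_c$, and an unbounded region containing $b$ and every other point of $V_f$. Pick any $v'\in V_f\smm\{v_c\}$ and construct a Jordan domain $D_c\subset\CC$ enclosing precisely the $\#V_f-2$ critical values $V_f\smm\{v_c,v'\}$, with $\partial D_c\cap V_f=\emptyset$ and $\overline{D_c}\cap(\G_c\cup\{b\})=\emptyset$. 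This is achievable by thickening a tree in the unbounded component of $\CC\smm\G_c$ joining the critical values to be pinched, avoiding $\{v_c,v',b\}$. Then $\G_c\subset\Omega_c:=\cbar\smm\overline{D_c}$, and Lemma \ref{pinching} produces a non-trivial CTP Belyi polynomial $(g_{D_c},A'_c)$. A direct case check (using that $\wt g_U$ sends any marked point inside $U\subset f^{-1}(D_c)$ to the added critical value $v$, while agreeing with $f$ outside $f^{-1}(D_c)$) shows that the regular set of $(g_{D_c},A'_c)$ is exactly $\varphi(E)$.

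By Lemma \ref{injection} there is $\tau'_c\in\mo(g_{D_c},b)$ with $\varphi^*(\tau'_c)=\tau_c$, and the hypothesis $\tau_c\in\st(a)$ upgrades to $\tau'_c\in\st(\varphi(a))$. The already-proven Belyi case, Lemma \ref{stab}, applied to $(g_{D_c},A'_c)$ gives $\st(\varphi(a))=\st^*(\varphi(E))$, so $\tau'_c$ fixes every $\varphi(a')$ with $a'\in E$. Lemma \ref{injection} then promotes this to $\tau_c\in\st(a')$ for every $a'\in E$, that is, $\tau_c\in\st^*(E)$. As $\st^*(E)$ is a subgroup containing every generator of $\st(a)$, we conclude $\tau\in\st^*(E)$, finishing the reduction. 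The main obstacle is the topological construction of $D_c$ isolating $\G_c$ from the $\#V_f-2$ critical values to be pinched; this is why we must shrink $\lambda_c$ and exploit the fact that the only critical value a tight pigtail can separate from $b$ is $v_c$ itself.
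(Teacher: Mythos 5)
Your proof is correct, and while it funnels into the same machinery as the paper (pinching, Lemma \ref{pinching}, the injectivity/stabilizer transfer of Lemma \ref{injection}, and the Belyi case Lemma \ref{stab}), the decomposition of $\st(a)$ into pieces each supported in a single pinching is genuinely different. The paper takes an arbitrary $\tau\in\st(a)$, writes it as a word $\rho_{i_m}\cdots\rho_{i_1}$ in the standard generators of $\mo(f,b)$, and interleaves powers of $\rho_\infty$ --- which is a full $(\deg f)$-cycle, hence transitive, because $f$ is a polynomial --- so that each corrected factor $\rho_\infty^{k_j}\rho_{i_j}\rho_\infty^{-k_{j-1}}$ already lies in $\st(a)$ and, being a word in $\rho_{i_j}$ and $\rho_\infty$ alone, lies in the image of the single pinching $\varphi_{i_j}^*$. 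You instead generate $\st(a)$ as the monodromy image of $f_*\pi_1(\cbar\smm f^{-1}(V_f),a)$, so that the generators are single pigtails, each conjugate to a power of one $\rho_i$, and you isolate each pigtail geometrically from the pinching locus. This trades the paper's algebraic bookkeeping (and its use of transitivity of $\rho_\infty$) for a topological normalization: you must first put the pigtail in minimal position (an embedded lollipop) before the complementary Jordan domain $D_c$ exists, which is exactly the kind of self-intersection-removal argument the paper's Lemma \ref{top1} supplies, so it is available. Two small slips, neither fatal: since $f$ is a polynomial and $D_c\subset\CC$ is bounded, $\infty\in V_f$ is automatically one of the two excluded critical values, so you are forced to take $v'=\infty$ rather than an arbitrary $v'\in V_f\smm\{v_c\}$ (and the pigtail at $c=\infty$ is redundant and best discarded); and $b$ lies on $\G_c$ itself, not in a complementary component. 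Your explicit check that the regular set of the pinched map equals $\varphi(E)$ is a point the paper uses only implicitly when invoking Lemma \ref{stab}, and it is worth stating as you do.
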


\begin{proof}
Denote $b=f(E)$ and $V_f=\{v_1,\cdots,v_n,\infty\}$. Let $\rho_{\infty}\in\mo(f,b)$ be the permutation induced by a simple closed curve $\g_{\infty}\subset\cbar\smm V_f$ such that the bounded component $D$ of $\cbar\smm\g_{\infty}$ contains $V_f\smm\{\infty\}$. Let $\rho_i\in\mo(f,b)$ ($1\le i\le n$) be the permutation induced by a simple closed curve $\g_i\subset D\smm V_f$ such that $\g_i\cap\g_{\infty}=\{b\}$ and the bounded component of $\cbar\smm\g_i$ contains exactly one point $v_i$ of $V_f$ (see Figure 8). Then $\mo(f,b)$ is generated by $\{\rho_i\}$.

For each $1\le i\le n$, there exists a Jordan domain $D_i\subset\cbar\smm\{\g_i\cup\g_{\infty}\}$ such that $V_f\smm\{v_i,\infty\}\subset D_i$. Let $(g_i, A_i)$ be a pinching of $(f,A)$ on $D_i$ and $\varphi^*_i:\,\mo(g_i,b)\to\mo(f,b)$ be the group homomorphism defined in Lemma \ref{injection}. Then both $\rho_{\infty}$ and $\rho_{i}$ are contained in $\varphi^*_i(\mo(g_i,b))$.

\begin{figure}[htbp]
\centering
\includegraphics[width=6cm]{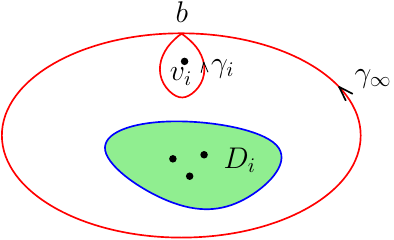}
\caption{Alternate pinching}
\end{figure}

For any $\tau\in\st(a)$, there is a sequence $\{i_1,\cdots,i_m\}$ with $i_j\in\{1,\cdots,n\}$ such that
$$
\tau=\rho_{i_m}\cdot\cdots\cdot\rho_{i_1}.
$$
There is an integer $k_1\in\ZZ$ such that $\tau_1:=\rho_{\infty}^{k_1}\cdot\rho_{i_1}\in\st(a)$. There is also an integer $k_2\in\ZZ$ such that $\tau_2:=\rho_{\infty}^{k_2}\cdot\rho_{i_2}\cdot\rho_{\infty}^{-k_1}\in\st(a)$. Inductively, for each $1<j<m$, There is an integer $k_j\in\ZZ$ such that $\tau_j:=\rho_{\infty}^{k_j}\cdot\rho_{i_j}\cdot\rho_{\infty}^{-k_{j-1}}\in\st(a)$. Set $\tau_m=\rho_{i_m}\cdot\rho_{\infty}^{-k_{m-1}}$.
Then
$$
\tau=\tau_m\cdot\cdots\cdot\tau_1,
$$
and $\tau_j\in\st(a)$ for $1\le j\le m$.

There exists $\wt\tau_j\in\mo(g_{i_j},b)$ ($1\le j\le m$) such that
$$
\begin{array}{l}
\tau_1=\rho_{\infty}^{k_1}\cdot\rho_{i_1}=\phi^*_{i_1}(\wt\tau_1), \\
\tau_j=\rho_{\infty}^{k_j}\cdot\rho_{i_j}\cdot\rho_{\infty}^{-k_{j-1}}=\phi^*_{i_j}(\wt\tau_j)\text{ for }1<j<m, \\
\tau_m=\rho_{i_m}\cdot\rho_{\infty}^{-k_{m-1}}=\phi^*_{i_m}(\wt\tau_m).
\end{array}
$$
By Lemma \ref{injection}, $\wt\tau_j\in\st(\varphi_{i_j}(a))$. For any point $a'\in E$, we have $\wt\tau_j\in\st(\varphi_{i_j}(a'))$ by Lemma \ref{pinching} and \ref{stab}. Again by Lemma \ref{injection}, $\tau_j\in\st(a')$. Thus $\tau(a')=a'$.
\end{proof}

\noindent
{\it Proof of Theorem \ref{main}}. Combining Lemma \ref{stab}, \ref{stabm} and \ref{symmetry}, we complete the proof. \qed

\appendix
\section{An elementary proof of Theorem \ref{mixing}}
The following proof is provided by Jie Cao.

\vskip 0.24cm
\noindent
{\it Proof}.
Denote $b=k^3$ and $\zeta=e^{\frac{2\pi i}{3}}$. Then
\begin{equation*}
\begin{split}
& a_1=i\sqrt{1+2k+2\sqrt{1+k+k^2}}, \\
& a_2=i\sqrt{1+2k\zeta+2\sqrt{1+k\zeta+k^2\zeta^2}}, \\
& a_3=i\sqrt{1+2k\zeta^2+2\sqrt{1+k\zeta^2+k^2\zeta}},
\end{split}
\end{equation*}
We want to prove
$$
\frac{a_1-a_2}{a_3-a_2}=e^{\frac{\pi i}{3}},
$$
or equivalently $a_1+a_2\zeta+a_3\zeta^2=0$. Denote
\begin{equation*}
\begin{split}
& t_1=1+k+k^2, \\
& t_2=1+k\zeta+k^2\zeta^2, \\
& t_3=1+k\zeta^2+k^2\zeta.
\end{split}
\end{equation*}
Then $a_1+a_2\zeta+a_3\zeta^2=0$ is equivalent to
$$
\sqrt{1+2k+2\sqrt{t_1}}+\zeta\sqrt{1+2k\zeta+2\sqrt{t_2}}=-\zeta^2\sqrt{1+2k\zeta^2+2\sqrt{t_3}}.
$$
Take square and applying the equality $1+\zeta+\zeta^2=0$, we obtain
$$
\sqrt{1+2k+2\sqrt{t_1}}\sqrt{1+2k\zeta+2\sqrt{t_2}}=(1-k\zeta^2)-\zeta^2\sqrt{t_1}-\zeta\sqrt{t_2}+\sqrt{t_3}.
$$
Take square again, we obtain
$$
\sqrt{t_1t_2}+\zeta\sqrt{t_2t_3}+\zeta^2\sqrt{t_1t_3}=(1-k)\zeta\sqrt{t_1}+(\zeta^2-k)\sqrt{t_2}+(1-k\zeta^2)\sqrt{t_3}.
$$
Again,
\begin{equation*}
\begin{split}
&[\zeta t_2-(1-k)(\zeta-k)]\sqrt{t_1t_3} \\
=\, & [(1-k)(1-k\zeta)-t_3]\sqrt{t_1t_2}+[(\zeta^2-k)(1-k\zeta^2)-\zeta^2t_1]\sqrt{t_2t_3}.
\end{split}
\end{equation*}
All these three coefficients are equal to $0$.
\qed

\noindent
Guizhen Cui \\
School of Mathematical Sciences, \\
Shenzhen University, \\
Shenzhen, P. R. China \\
gzcui@szu.edu.cn

\vskip 0.24cm
\noindent
Yiran Wang \\
Beijing International Center for Mathematical Research (BICMR) \\
Peking University \\
Beijing, P. R. China \\
wangyiran@amss.ac.cn
\end{document}